\documentclass[12pt]{amsart}
\hsize=14cm \textwidth=14cm
\usepackage{amssymb,amsmath,amsthm}
\numberwithin{equation}{section}

\newtheorem{thm}{Theorem}[section]
\newtheorem{lemma}[thm]{Lemma}
\newtheorem{remark}[thm]{Remark}

\newtheorem{definition}[thm]{Definition}

\allowdisplaybreaks

\begin{document}

\title[Extreme Eigenvalues of Quaternion Sample Covariance Matrix] {Extreme Eigenvalues of Large Dimensional Quaternion Sample Covariance Matrix }

\author{HUIQIN LI,\ \ ZHIDONG BAI}
\thanks{ H. Q. Li was partially supported by a grant CNSF 11301063; Z. D. Bai was partially supported by CNSF  11171057, the Fundamental Research Funds for the Central Universities, PCSIRT, and the NUS Grant R-155-000-141-112.
}

\address{KLASMOE and School of Mathematics \& Statistics, Northeast Normal University, Changchun, P.R.C., 130024.}
\email{lihq118@nenu.edu.cn}
\address{KLASMOE and School of Mathematics \& Statistics, Northeast Normal University, Changchun, P.R.C., 130024.}
\email{baizd@nenu.edu.cn}

\subjclass{Primary 15B52, 60F15, 62E20;
Secondary 60F17}

\maketitle

\begin{abstract}
In this paper, we shall investigate the almost sure limits of the largest and smallest eigenvalues of a quaternion sample covariance matrix. Suppose that $\mathbf X_n$ is a $p\times n$ matrix whose elements are independent quaternion variables with mean zero, variance 1 and uniformly bounded fourth moments. Denote $\mathbf S_n=\frac{1}{n}\mathbf X_n\mathbf X_n^*$. In this paper, we shall show that $s_{\max}\left(\mathbf S_n\right)=s_{p}\left(\mathbf S_n\right)\to\left(1+\sqrt y\right)^2, a.s.$ and $s_{\min}\left(\mathbf S_n\right)\to\left(1-\sqrt y\right)^2,a.s.$ as $n\to\infty$, where $y=\lim p/n$,
 $s_1\left(\mathbf S_n\right)\le\cdots\le s_{p}\left(\mathbf S_n\right)$ are the eigenvalues of $\mathbf{S}_n$,   $s_{\min}\left(\mathbf S_n\right)=s_{p-n+1}\left(\mathbf S_n\right)$ when $p>n$ and $s_{\min}\left(\mathbf S_n\right)=s_1\left(\mathbf S_n\right)$ when $p\le n$. We also prove that the set of  conditions  are necessary for $s_{\max}\left(\mathbf S_n\right)\to\left(1+\sqrt y\right)^2, a.s.$ when the entries of $\mathbf {X}_n$ are i. i. d.

{\bf Keywords: }  Extreme eigenvalues, Large dimension, Quaternion matrices, Random matrix theory, Sample covariance matrix,

\end{abstract}

\section{Introduction. }
 Let $\mathbf A$ be a $p \times p$ Hermitian matrix with eigenvalues ${s_j\left(\mathbf A\right)}, j = 1,2, \cdots, p$ arranged ascendingly, i.e.,  $s_1\left(\mathbf A\right)\le\cdots \le s_p\left(\mathbf A\right)$. Then the empirical spectral distribution (ESD) of the matrix $\mathbf A$ is defined by
$${F^{\mathbf A}}\left(x\right) =\frac{1}{p}\max\left\{j:{s _j\left(\mathbf A\right)\le x}\right\}.$$
If there is a sequence of random matrices whose ESD weakly converges to a limit, then the limit is said to be the LSD (Limiting Spectral Distribution) of the sequence of random matrices.

Eigenvalues of random matrix are often used in multivariate statistical analysis, such as the principal component analysis, multiple discriminant analysis, and canonical correlation analysis, etc. For example, many important statistics in multivariate statistical analysis are constructed by the eigenvalues of sample covariance matrices or those of multivariate $F$ matrices. Moreover, they can be written as functions of integrals with respect to the ESD of sample covariance matrices or multivariate $F$ matrices. When LSD is known, the corresponding functionals with respect to the LSD can be viewed as the population parameters and those respect to the ESD can be considered as the parameter estimators. Therefore, one may want to apply the Helly-Bray theorem to find the approximation of the statistics to their estimand. Unfortunately, the integrands are usually unbounded which leads to the failure of the application of the Helly-Bray theorem. Thus the limiting behavior of the extreme eigenvalues of sample covariance matrices or multivariate $F$ matrices is of special interest.

When the underlying random variables are real and/or complex, intensive work has been done in the literature (see \cite{geman1980limit,Yin1988,Bai1988166,silverstein1985smallest,BaiYin1993,bai1999methodologies,bai1998no,bai1987limiting}, among others). It is well known that the ESD of a sample covariance matrix $\mathbf W_n=\frac{1}{n}{\mathbf Y_n}{\mathbf Y_n^*}$ (the entries of $\mathbf Y_n =\left(y_{jl}\right)_{p\times n}$ are i.i.d. real random variables with mean zero and variance $\sigma^2$) converges to the M-P (Mar\v{c}enko-Pastur) law $F_y\left(x\right)$ with density
\begin{align*}
f_y\left(x\right)={\frac{1}{{2\pi xy{\sigma ^2}}}\sqrt {\left(b - x\right)\left(x -a\right)}}I_{[a,b]}\left(x\right)+I_{(1,\infty)}\left(y\right)\left(1-y^{-1}\right)\delta\left(x\right)
\end{align*}
where $a = {\sigma ^2}{(1 - \sqrt y )^2}$, $b = {\sigma ^2}{(1 + \sqrt y )^2}$ and $y =\lim p/n\in\left(0,\infty\right) $. Here $\delta\left(x\right)$ denotes the Dirac delta function and $I_{[a,b]}\left(x\right)$ denotes the indicator function of the interval $[a,b]$. Denote the eigenvalues of $\mathbf W_n$ by $s_1\left(\mathbf W_n\right),\cdots,s_{p}\left(\mathbf W_n\right)$, arranged in ascending  order. For the convergence of $s_{p}\left(\mathbf W_n\right)$,
Yin, Bai and Krishnaiah (1988) \cite{Yin1988} proved that $s_{p}\left(\mathbf W_n\right)\to\sigma^2\left(1+\sqrt y\right)^2, a.s.$ under the condition that
\begin{align*}
{\rm E}\left|y_{11}\right|^4<\infty.
\end{align*}  Moreover, Bai, Silverstein and Yin (1988) \cite{Bai1988166} showed that finite fourth moment is also necessary for the strong convergence of the largest eigenvalue. Therefore, we obtain the sufficient and necessary conditions of the strong convergence of the largest eigenvalue of $\mathbf W_n$. For the convergence of the smallest eigenvalue, we need to make the following declaration:
\begin{align*}
s_{\min}\left(\mathbf W_n\right)=
\begin{cases}
s_{\min}\left(\mathbf W_n\right)=s_1\left(\mathbf W_n\right)&p\le n,\cr
s_{\min}\left(\mathbf W_n\right)=s_{p-n+1}\left(\mathbf W_n\right)&p>n.\cr\end{cases}
\end{align*}
Bai and Yin (1993) \cite{BaiYin1993} proved that
\begin{align*}
s_{\min}\left(\mathbf W_n\right)\to\sigma^2\left(1-\sqrt y\right)^2,   a.s.
\end{align*}
where the underlying distribution has a zero mean and finite fourth moment. The results above were extended to the complex case in \cite{bai1999methodologies}. In this paper, we shall show that the conclusions are still true for the quaternion sample covariance matrix.

Next we introduce some notations and some basic properties about quaternions. The quaternion base can be represented by four $2\times 2$ matrices as
\begin{align*}
\mathbf {e} = \left( \begin{array}{cc}
1&0\\
0&1\\
\end{array} \right),
\mathbf i = \left( \begin{array}{cc}
i&0\\
0&- i\\
\end{array} \right),
\mathbf j = \left( \begin{array}{cc}
0&1\\
-1&0\\
\end{array}\right),
\mathbf k = \left( \begin{array}{cc}
0&i\\
i&0\\
\end{array}\right),\end{align*}
where $i=\sqrt{-1}$ denotes the imaginary unit. Thus, a quaternion can be represented by a $2\times 2$ complex matrix as
\begin{align*}
x = a \cdot \mathbf e + b \cdot \mathbf i + c \cdot \mathbf j + d \cdot \mathbf k =\left( {\begin{array}{*{20}{c}}
a+bi &c+di\\
{ - c+di }&{a-bi }
\end{array}} \right)
=\left( {\begin{array}{*{20}{c}}
\lambda &\omega\\
-\overline{\omega }&\overline{\lambda}
\end{array}} \right)
\end{align*}
where the coefficients  $a,b,c,d$ are real and $\lambda=a+bi,\omega=c+di$. The conjugate of $x$ is defined as
$$\bar x = a \cdot \mathbf e - b \cdot \mathbf i - c \cdot \mathbf j - d \cdot \mathbf k=\left( {\begin{array}{*{20}{c}}
a-bi &-c-di \\
{ c-di }&{a+bi }
\end{array}} \right)
=\left( {\begin{array}{*{20}{c}}
\overline{\lambda} &-\omega\\
\overline{\omega }&\lambda
\end{array}} \right) $$
and its norm as
 $$\left\| x \right\| = \sqrt {{a^2} + {b^2} + {c^2} + {d^2}}=\sqrt{\left|\lambda\right|^2+\left|\omega\right|^2}.$$ By the property of quaternions, one has
 \begin{align}\label{al:11}
 {\rm det}\left(x\right)=\left\|x\right\|^2.
  \end{align}
Furthermore, let $\mathbf I_p^Q$ denote $p\times p$ quaternion identity matrix, i. e.,
\begin{align*}
\mathbf I_p^Q={\rm diag}\left(\overbrace{\mathbf e,\cdots,\mathbf e}^p\right).
\end{align*}
 More details can be found in \cite{adler1995quaternionic,finkelstein1962foundations,zhang1995numerical,kuipers1999quaternions,mehta2004random,zhang1997quaternions,zhang1994numerical}. It is worth mentioning that any \ $n\times n$ quaternion matrix \ $\mathbf Y$ can be represented by a \ $2n\times 2n$ complex matrix \ $\psi(\mathbf Y)$. Consequently, we can deal with quaternion matrices as complex matrices for convenience. It is known (see \cite{zhang1997quaternions}) that the multiplicities of all the eigenvalues (obviously they are all real) of $\psi(\mathbf Y)$ are even. Taking one from each of the $n$ pairs of eigenvalues of $\psi(\mathbf Y)$, the $n$ values are defined as the eigenvalues of $\mathbf Y$.

This paper is organized as follows. The main theorems are stated in Section 2. In Section 3, we outline some knowledges of graph theory and introduce an operation called ``Diamond product" which will be used in Section 4. Section 4, Section 5, and Section 6 give the proofs of the main theorems, respectively. Some technical  lemmas are postponed to Section 7.

\section{ Main Theorem.}

In  this paper, we consider the strong limits of the largest and smallest eigenvalues of quaternion sample covariance matrices. Let $$ \mathbf S_n=\frac{1}{n}{\mathbf X_n}{\mathbf X_n^*}$$ where $\mathbf X_n$ is defined in Theorem \ref{th:1} and denote the eigenvalues of $\mathbf S_n$ by $s_1\left(\mathbf S_n\right),\cdots,s_{p}\left(\mathbf S_n\right)$, arranged in ascending order. Firstly, we give the upper and lower bounds of extreme eigenvalues in Theorem \ref{th:1} when $y=\lim p/n\in\left(0,1\right)$. Combining Theorem 1.1 $\left(F^{\mathbf S_n}\rightarrow F_y,{ a.s.}\right)$ in \cite{li2013convergence}, we can get Theorem \ref{th:2} about the limits of the largest and smallest eigenvalues while $y\in\left(0,1\right)$. Considering that ${\mathbf X_n}{\mathbf X_n^*}$ and $ {\mathbf X_n^*}{\mathbf X_n}$ have the same set of nonzero eigenvalues, Theorem \ref{th:2} is still true for $y\in \left(1,\infty\right)$. Finally, we present sufficient and necessary conditions for the existence of the strong limit of the largest eigenvalue of $\mathbf S_n$. These theorems can be stated as the following:
\begin{thm}\label{th:1}
Let $ \mathbf S_n=\frac{1}{n}{\mathbf X_n}{\mathbf X_n^*}$ where $\mathbf X_n = \left(x_{jl},j = 1, \cdots ,p, l= 1, \cdots ,n\right)$ and $x_{jl}$ are quaternion variables. Assume that the following conditions hold:
\begin{enumerate}
  \item  $x_{jl}$ are independent,
  \item  ${\rm E}x_{jl}=0$ and ${\rm Var}x_{jl}=\sigma^2$, for all $j,l$,
  \item  $\sup_{jl}{\rm E}\left\|x_{jl}\right\|^4\le M$, $M$ is a positive constant,
  \item  there exists a random variable $\xi$ with finite 4th moment and a constant $L$ such that for any $\delta>0$, \begin{align}\label{al:17}
      \frac{1}{np}\sum_{jl}{\rm P}\left(\left\|x_{jl}\right\|>\delta\sqrt n\right)<L{\rm P}\left(\left|\xi\right|>\delta\sqrt n\right).
      \end{align}
\end{enumerate}
Then we have
\begin{align*}
-2\sqrt y\sigma^2\le& \liminf s_1\left(\mathbf S_n-\sigma^2\left(1+y\right)\mathbf I_{p}^Q\right)\\
\le&\limsup s_{p}\left(\mathbf S_n-\sigma^2\left(1+y\right)\mathbf I_{p}^Q\right)\le2\sqrt y\sigma^2 \quad a.s.
\end{align*}
as $n\to\infty,p\to\infty,p/n\to y\in\left(0,1\right)$.
\end{thm}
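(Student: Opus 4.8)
The plan is to reduce the two-sided statement to a single operator-norm bound. Writing $\mathbf T_n=\mathbf S_n-\sigma^2\left(1+y\right)\mathbf I_p^Q$, I observe that centering at $\sigma^2\left(1+y\right)$ makes the two extreme eigenvalues symmetric: since $\sigma^2\left(1+y\right)+2\sqrt y\sigma^2=\sigma^2\left(1+\sqrt y\right)^2$ and $\sigma^2\left(1+y\right)-2\sqrt y\sigma^2=\sigma^2\left(1-\sqrt y\right)^2$, it suffices to show $\|\mathbf T_n\|\le 2\sqrt y\sigma^2+o(1)$ almost surely. (This symmetrization is legitimate only because $y\in\left(0,1\right)$, so that $\mathbf S_n$ is generically nonsingular and its smallest eigenvalue is genuinely pinned; the singular regime $y\ge 1$ is handled separately by the reduction through $\mathbf X_n^*\mathbf X_n$.) I would establish the norm bound by the method of high moments: estimate ${\rm E}\,{\rm tr}\,\mathbf T_n^{2k}$ for an index $k=k_n\to\infty$ chosen to grow slowly with $n$, and then combine Markov's inequality with the Borel--Cantelli lemma. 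Because ${\rm tr}\,\mathbf T_n^{2k}\ge\|\mathbf T_n\|^{2k}$ in the quaternion trace (the eigenvalues being real), a bound of the form ${\rm E}\,{\rm tr}\,\mathbf T_n^{2k}\le p\,C\,\left(2\sqrt y\sigma^2\right)^{2k}k^{O(1)}$ renders $\sum_n{\rm P}\left(\|\mathbf T_n\|>2\sqrt y\sigma^2+\varepsilon\right)$ summable once $k_n\asymp\log n$.

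Before the moment computation I would carry out the standard truncation, centralization and rescaling. Using hypothesis (3) together with the domination (4) by a single fourth-moment random variable $\xi$, I would truncate the entries at a level $\eta_n\sqrt n$ with $\eta_n\downarrow 0$ slowly, then re-center to mean zero and re-normalize to variance $\sigma^2$, and show by a norm-perturbation and rank-inequality argument that none of these operations disturbs the almost-sure value of $\limsup\|\mathbf T_n\|$. This reduces matters to entries uniformly bounded by $\eta_n\sqrt n$, which is exactly the control needed to dominate the high-order contributions in the enumeration below.

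The core is the graph expansion of ${\rm E}\,{\rm tr}\,\mathbf T_n^{2k}$. Expanding $\mathbf T_n^{2k}$, the centering term $-\sigma^2\left(1+y\right)\mathbf I_p^Q$ plays the same bookkeeping role as in Yin--Bai--Krishnaiah, and the trace becomes a sum over closed paths of length $2k$ in a bipartite index graph on the row set $\{1,\dots,p\}$ and column set $\{1,\dots,n\}$. Since the entries have mean zero, only paths in which every edge is traversed at least twice survive, and the leading contribution comes from the tree-type (canonical) graphs, producing the Catalan-type count whose generating constant is $\left(2\sqrt y\sigma^2\right)^{2k}$. The genuinely new feature over the real/complex case is that the edge weights are quaternions and do not commute, so the ordered product of weights along a path and the expectation over matched edges must be organized by the Diamond product of Section 3; this device tracks how the $2\times 2$ matrix factors multiply in sequence and isolates the real scalar part that the quaternion trace retains. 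I would verify that for a matched pair of edges the relevant quaternion expectation collapses to $\sigma^2$ in the correct quaternion normalization, so that the leading term matches the scalar theory exactly.

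The main obstacle is precisely this non-commutative enumeration. One must show that the Diamond-product weighting of the canonical trees reproduces the same radius $2\sqrt y\sigma^2$ as in the scalar case, and --- more delicately --- that every non-canonical graph (one carrying a repeated vertex, a cycle, or an edge of multiplicity exceeding two) contributes only a lower-order term even after the quaternion weights are inserted. Here the truncation bound $\eta_n\sqrt n$, the fourth-moment control (3), and a careful count of the number of graphs of each topology combine to give the estimate $p\,C\left(2\sqrt y\sigma^2\right)^{2k}k^{O(1)}$. Keeping the $k$-dependence polynomial rather than exponential is what licenses the choice $k_n\asymp\log n$ and hence the Borel--Cantelli conclusion; securing this polynomial-in-$k$ control with quaternion, rather than scalar, edge weights is the technical heart of the argument.
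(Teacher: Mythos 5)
Your reduction of the theorem to the single bound $\limsup\left\|\mathbf S_n-\sigma^2\left(1+y\right)\mathbf I_p^Q\right\|_2\le 2\sqrt y\,\sigma^2$ a.s.\ is correct and is exactly the paper's final step, and your truncation/centralization plan matches the paper's. But the core of your argument --- a direct high-moment estimate of ${\rm E}\,{\rm tr}\,\mathbf T_n^{2k}$ with $k\asymp\log n$, where $\mathbf T_n=\mathbf S_n-\sigma^2\left(1+y\right)\mathbf I_p^Q$ --- has a genuine gap that you do not confront. If you expand $\mathbf T_n^{2k}$ binomially, the individual terms are of size roughly $\binom{2k}{j}\left(\sigma^2\left(1+y\right)\right)^{2k-j}{\rm tr}\,\mathbf S_n^{j}$, and each is of order $\left(\sigma^2\left(1+\sqrt y\right)^2\right)^{j}\left(\sigma^2\left(1+y\right)\right)^{2k-j}$, vastly larger than the target $\left(2\sqrt y\,\sigma^2\right)^{2k}$. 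The whole difficulty is exhibiting the cancellation among these terms combinatorially; asserting that ``the leading contribution comes from the tree-type graphs, producing the Catalan-type count whose generating constant is $\left(2\sqrt y\sigma^2\right)^{2k}$'' is precisely the claim that needs proof, and it is not obtained by the Yin--Bai--Krishnaiah bookkeeping (which estimates the \emph{uncentered} ${\rm tr}\,\mathbf S_n^k$ and yields only the radius $\left(1+\sqrt y\right)^2$).

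The paper avoids this by the Bai--Yin device, which you are missing. First the diagonal is removed: $\mathbf R_n=\mathbf S_n-n^{-1}{\rm diag}\left(\mathbf X_n\mathbf X_n^*\right)$, with ${\rm diag}\left(\mathbf S_n\right)-\sigma^2\mathbf I_p^Q\to 0$ handled by a law of large numbers (the estimate (\ref{al:19})). Then the Diamond product is introduced \emph{not} to track quaternion non-commutativity along paths, as you suggest, but to define the off-diagonally restricted products $\mathbf R_n\left(l\right)$ in (\ref{al:9}); the restrictions $u_{j}\neq u_{j+1}$, $v_j\neq v_{j+1}$ are what make the graph enumeration for ${\rm E}\,{\rm tr}\,\mathbf R_n^{2m}\left(l\right)$ (with $m\asymp\log n$ but $l$ \emph{fixed}) tractable and yield $\limsup\left\|\mathbf R_n\left(l\right)\right\|_2\le\left(2l+1\right)\left(l+1\right)y^{\left(l-1\right)/2}\sigma^{2l}$ (Lemma \ref{le:3}). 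The non-commutativity of the quaternion entries is then dispatched by the determinant inequality $\left|{\rm tr}\left(x_{i_1j_1}\cdots\overline x_{i_1j_{2ml}}\right)\right|\le 2\left\|x_{i_1j_1}\right\|\cdots\left\|x_{i_1j_{2ml}}\right\|$, not by collapsing matched pairs to $\sigma^2$. Finally the algebraic recursion $\mathbf R_n\mathbf R_n\left(k\right)=\mathbf R_n\left(k+1\right)+y\sigma^2\mathbf R_n\left(k\right)+y\sigma^4\mathbf R_n\left(k-1\right)+o\left(1\right)$ (Lemma \ref{le:6}) converts $\left(\mathbf R_n-y\sigma^2\mathbf I_p^Q\right)^k$ for \emph{fixed} $k$ into a combination of the $\mathbf R_n\left(r\right)$ with coefficients bounded by $2^k$ (Lemma \ref{le:7}), and only after taking $n\to\infty$ does one let $k\to\infty$ to extract the radius $2\sqrt y\,\sigma^2$. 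This two-scale limit (power $\to\infty$ with $n$ only inside each fixed $\mathbf R_n\left(l\right)$, then the outer power $k\to\infty$ deterministically) is the mechanism that realizes the cancellation your sketch takes for granted; without it, or an equivalent substitute, your proof does not go through.
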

From Theorem \ref{th:1}, one can easily get the following theorem:
\begin{thm}\label{th:2}
Under the conditions of Theorem \ref{th:1}, we have
\begin{align}
\lim s_{1}\left(\mathbf S_n\right)=\left(1-\sqrt y\right)^2\sigma^2\quad a.s.\label{al:1}\\
\lim s_{p}\left(\mathbf S_n\right)=\left(1+\sqrt y\right)^2\sigma^2\quad a.s.\label{al:2}
\end{align}
as $n\to\infty,p\to\infty,p/n\to y\in\left(0,1\right)$.
\end{thm}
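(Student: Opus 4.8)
The plan is to deduce Theorem \ref{th:2} from the two-sided bound of Theorem \ref{th:1} together with the almost sure convergence $F^{\mathbf S_n}\to F_y$ established in \cite{li2013convergence}. The guiding idea is that Theorem \ref{th:1} prevents the extreme eigenvalues from straying past the edges of the limiting support, while the ESD convergence forces them all the way out to those edges.

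First I would recast Theorem \ref{th:1} as one-sided bounds on the raw extreme eigenvalues. Since $\mathbf S_n-\sigma^2(1+y)\mathbf I_p^Q$ merely shifts every eigenvalue by $-\sigma^2(1+y)$, we have $s_1(\mathbf S_n-\sigma^2(1+y)\mathbf I_p^Q)=s_1(\mathbf S_n)-\sigma^2(1+y)$ and likewise for $s_p$. Writing $\sigma^2(1\pm\sqrt y)^2=\sigma^2(1+y)\pm2\sqrt y\,\sigma^2$, Theorem \ref{th:1} immediately gives
\begin{align*}
\liminf s_1(\mathbf S_n)\ge\sigma^2(1-\sqrt y)^2,\qquad
\limsup s_p(\mathbf S_n)\le\sigma^2(1+\sqrt y)^2\quad a.s.
\end{align*}

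Next I would extract the reverse inequalities from the limiting law. For $y\in(0,1)$ the density $f_y$ carries no atom at the origin and is supported on $[a,b]$ with $a=\sigma^2(1-\sqrt y)^2$ and $b=\sigma^2(1+\sqrt y)^2$, so $F_y$ is continuous and strictly increasing on $[a,b]$. Fix $\varepsilon>0$. Because $F^{\mathbf S_n}(a+\varepsilon)\to F_y(a+\varepsilon)>0$ a.s., eventually $F^{\mathbf S_n}(a+\varepsilon)>0$, which means some eigenvalue lies below $a+\varepsilon$; hence $\limsup s_1(\mathbf S_n)\le a+\varepsilon$ a.s. Dually, $F^{\mathbf S_n}(b-\varepsilon)\to F_y(b-\varepsilon)<1$ a.s. forces an eigenvalue above $b-\varepsilon$, so $\liminf s_p(\mathbf S_n)\ge b-\varepsilon$ a.s. Intersecting these full-measure events over the countable sequence $\varepsilon=1/m$ and letting $m\to\infty$ yields $\limsup s_1(\mathbf S_n)\le a$ and $\liminf s_p(\mathbf S_n)\ge b$ a.s. Combined with the bounds of the previous paragraph, this produces \eqref{al:1} and \eqref{al:2}.

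Since both inputs are already in hand, this argument carries essentially no technical obstacle; the only thing requiring care is the logical direction of each bound. Theorem \ref{th:1} is used to keep $s_1$ from dropping below $a$ and $s_p$ from climbing above $b$, whereas the ESD convergence---which by itself controls only the bulk and could in principle miss isolated outliers---is used solely to push $s_1$ down to $a$ and $s_p$ up to $b$. The two mechanisms are complementary, and neither alone would suffice to identify the limits.
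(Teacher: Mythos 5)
Your proposal is correct and follows essentially the same route as the paper: Theorem \ref{th:1} supplies $\liminf s_1(\mathbf S_n)\ge\sigma^2(1-\sqrt y)^2$ and $\limsup s_p(\mathbf S_n)\le\sigma^2(1+\sqrt y)^2$, while the a.s.\ convergence $F^{\mathbf S_n}\to F_y$ from \cite{li2013convergence} supplies the reverse inequalities. The only difference is that you spell out the elementary deduction of the edge bounds from the ESD convergence, which the paper simply cites as a consequence of that theorem.
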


\begin{remark}\label{re:2}
(\ref{al:1}) and (\ref{al:2}) are trivially true for $y=1$. 
 If $p>n$, one has that the $p-n$ smallest eigenvalues of $\mathbf S_n$ must be zero. Define
 \begin{align*}
 s_{\min}\left(\mathbf S_n\right)=
 \begin{cases}
 s_{p-n+1}\left(\mathbf S_n\right)&p>n,\cr
 s_1\left(\mathbf S_n\right)&p\le n.\cr\end{cases}
 \end{align*}
 We assert that Theorem \ref{th:2} is still true for  $y\in \left(1,\infty\right)$. In fact, when $y>1$, let $\breve{\mathbf S}_n=\frac{1}{p}\mathbf X_n^*\mathbf X_n$ and $\breve y=1/y\in\left(0,1\right)$. Applying Theorem \ref{th:2}, we have
\begin{align*}
\lim s_{\min}\left(\breve{\mathbf S}_n\right)=\left(1-\sqrt {\breve y}\right)^2\sigma^2=\frac{1}{ y}\left(1-\sqrt { y}\right)^2\sigma^2\quad a.s.\\
\lim s_{p}\left(\breve{\mathbf S}_n\right)=\left(1+\sqrt {\breve y}\right)^2\sigma^2=\frac{1}{ y}\left(1+\sqrt { y}\right)^2\sigma^2\quad a.s.
\end{align*}
which implies that
\begin{align*}
&\lim s_{\min}\left({\mathbf S_n}\right)=\frac{p}{n}\lim s_{\min}\left(\breve{\mathbf S}_n\right)=\left(1-\sqrt {y}\right)^2\sigma^2\quad a.s.\\
&\lim s_{p}\left({\mathbf S_n}\right)=\frac{p}{n}\lim s_{p}\left(\breve{\mathbf S}_n\right)=\left(1+\sqrt { y}\right)^2\sigma^2\quad a.s..
\end{align*}
\end{remark}

\begin{thm}\label{th:3}
Suppose that the entries of $\mathbf Z_n$ are i.i.d. quaternion random variables and the ratio of dimension to sample size $p/n\to y$, then the largest eigenvalue of $\mathbf \Lambda_n=\frac{1}{n}\mathbf{Z_nZ_n^*}$ tends to $\mu$ with probability 1 if and only if the following conditions are true:
\begin{align*}
({\rm i}) &\quad{\rm E} z_{11}=0;\\
({\rm ii})&\quad{\rm Var} z_{11}=\sigma^2;\\
({\rm iii})&\quad {\rm E}\left\| z_{11}\right\|^4<\infty;\\
({\rm iv})&\quad\mu=\left(1+\sqrt { y}\right)^2\sigma^2.
\end{align*}
\end{thm}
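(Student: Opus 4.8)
The plan is to treat the equivalence as a sufficiency part that follows from the results already proved and a necessity part that contains all the work. For sufficiency I would assume (i)--(iv): since $\{z_{jl}\}$ are i.i.d.\ with mean zero, variance $\sigma^2$ and $\mathrm E\|z_{11}\|^4<\infty$, conditions (1)--(3) of Theorem \ref{th:1} are immediate, and condition (4) holds with $\xi=\|z_{11}\|$ and $L=1$ because $\frac1{np}\sum_{jl}\mathrm P(\|z_{jl}\|>\delta\sqrt n)=\mathrm P(\|z_{11}\|>\delta\sqrt n)$ in the i.i.d.\ case. Then Theorem \ref{th:2} (together with Remark \ref{re:2} for $y\ge1$) gives $s_p(\mathbf\Lambda_n)\to(1+\sqrt y)^2\sigma^2=\mu$ a.s. The substance is therefore the necessity of (i)--(iv), which I would establish in the order (iii), (i), then (ii) and (iv), assuming throughout that $s_p(\mathbf\Lambda_n)\to\mu$ a.s.\ for a finite $\mu$.

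For the necessity of (iii) I would start from the elementary bound coming from the complex representation: each quaternion entry $z_{jl}$ corresponds to a $2\times2$ block both of whose singular values equal $\|z_{jl}\|$, so $\|\mathbf Z_n\|_{\mathrm{op}}\ge\max_{j,l}\|z_{jl}\|$ and
\begin{align*}
s_p(\mathbf\Lambda_n)=\tfrac1n\|\mathbf Z_n\|_{\mathrm{op}}^2\ge\tfrac1n\max_{j\le p,\,l\le n}\|z_{jl}\|^2 .
\end{align*}
If $s_p\to\mu<\infty$ then $\max_{j\le p,\,l\le n}\|z_{jl}\|\le M\sqrt n$ eventually for suitable $M$, so for the contradiction I would assume $\mathrm E\|z_{11}\|^4=\infty$, which is equivalent to $\sum_n n\,\mathrm P(\|z_{11}\|>M\sqrt n)=\infty$ for every $M>0$. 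The difficulty is that the events $\{\max_{j\le p,\,l\le n}\|z_{jl}\|>M\sqrt n\}$ are not independent across $n$. I would resolve this by passing to the geometric subsequence $n_k=2^k$ and the column-disjoint blocks $D_k=\{(j,l):n_{k-1}<l\le n_k,\ 1\le j\le p_{n_k}\}$; disjointness of the columns makes the events $A_k=\{\max_{(j,l)\in D_k}\|z_{jl}\|>M\sqrt{n_k}\}$ independent, with $|D_k|\asymp n_k^2$. Using $1-(1-q)^m\ge\min(\tfrac12,\tfrac12qm)$ and comparing $\sum_k\mathrm P(A_k)$ with $\sum_n n\,\mathrm P(\|z_{11}\|>M\sqrt n)$ shows $\sum_k\mathrm P(A_k)=\infty$, so by the second Borel--Cantelli lemma $A_k$ occurs infinitely often and $s_p(\mathbf\Lambda_{n_k})\ge M^2$ i.o.; letting $M\to\infty$ contradicts $s_p\to\mu$. \emph{This Borel--Cantelli blocking is the main obstacle}: the point is to extract genuinely independent events whose probabilities sum to the second moment $\mathrm E(\|z_{11}\|^2)^2$ rather than merely to $\mathrm E\|z_{11}\|^2$, which is exactly what the two-dimensional ($\asymp n^2$-entry) structure of the array provides. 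Having (iii), the mean and variance of $z_{11}$ are finite.

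For (i) I would write $\mathbf Z_n=\mathbf M_n+\mathbf Y_n$ with $\mathbf M_n=\mathrm E\mathbf Z_n$ (all entries equal to $\nu=\mathrm E z_{11}$) and $\mathbf Y_n$ centered with finite fourth moment. Since $\mathbf M_n$ is rank one, $\|\mathbf M_n\|_{\mathrm{op}}=\|\nu\|\sqrt{pn}$, while the sufficiency direction applied to $\mathbf Y_n$ gives $\|\mathbf Y_n\|_{\mathrm{op}}=O(\sqrt n)$ a.s.; hence
\begin{align*}
s_p(\mathbf\Lambda_n)=\tfrac1n\|\mathbf Z_n\|_{\mathrm{op}}^2\ge\tfrac1n\bigl(\|\nu\|\sqrt{pn}-O(\sqrt n)\bigr)^2=\bigl(\|\nu\|\sqrt p-O(1)\bigr)^2 .
\end{align*}
If $\nu\ne0$ the right-hand side diverges as $p\to\infty$, contradicting $s_p\to\mu<\infty$; thus $\nu=0$, giving (i), and (ii) holds with the finite $\sigma^2=\mathrm{Var}\,z_{11}$. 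Finally, $z_{11}$ is now centered with variance $\sigma^2$ and finite fourth moment, so the already-proved sufficiency yields $s_p(\mathbf\Lambda_n)\to(1+\sqrt y)^2\sigma^2$ a.s.; comparing with $s_p\to\mu$ and using uniqueness of the almost sure limit gives $\mu=(1+\sqrt y)^2\sigma^2$, which is (iv). This would complete the proof.
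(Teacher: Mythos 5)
Your proposal is correct, and its overall architecture coincides with the paper's: sufficiency by checking the hypotheses of Theorem \ref{th:1} (with $\xi=\|z_{11}\|$ in the i.i.d.\ case) and invoking Theorem \ref{th:2} and Remark \ref{re:2}; necessity of the fourth moment from a lower bound on $s_p(\mathbf\Lambda_n)$ by diagonal data of the matrix; necessity of mean zero from the rank-one decomposition $\mathbf Z_n=\mathrm E\mathbf Z_n+(\mathbf Z_n-\mathrm E\mathbf Z_n)$ with $\|\mathrm E\mathbf Z_n\|_2=\|\nu\|\sqrt{pn}$; and (ii), (iv) falling out by reapplying the sufficiency direction. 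The one step where you genuinely diverge is the fourth-moment necessity. The paper bounds $s_p(\mathbf\Lambda_n)\ge\max_{u\le p}\frac1n\sum_{v=1}^n\|z_{uv}\|^2$ (via $\mathbf e_{2u}'\psi(\mathbf\Lambda_n)\mathbf e_{2u}$, i.e.\ the maximal diagonal entry) and then cites Lemma \ref{le:4} (Lemma B.25 of Bai--Silverstein), whose ``only if'' direction with $\alpha=\beta=1$ says that boundedness of the maximal row average of $\|z_{uv}\|^2$ forces $\mathrm E\|z_{11}\|^4<\infty$. You instead use the cruder bound $s_p(\mathbf\Lambda_n)\ge\frac1n\max_{j,l}\|z_{jl}\|^2$ and reprove the needed divergence from scratch by a second Borel--Cantelli argument on column-disjoint geometric blocks, exploiting that the array has $\asymp n^2$ entries so that $\sum_k\mathrm P(A_k)$ compares with $\sum_n n\,\mathrm P(\|z_{11}\|>M\sqrt n)\asymp\mathrm E\|z_{11}\|^4$. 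Both are valid; your version is self-contained and more elementary (it is essentially the internal mechanism of the cited lemma, in the spirit of Bai--Silverstein--Yin \cite{Bai1988166}), while the paper's is shorter by delegating the blocking to Lemma \ref{le:4}. A cosmetic remark: the paper's subsection headings ``Condition (i)'' and ``Condition (iii)'' are swapped relative to their content, so your ordering (iii) then (i) is in fact exactly the order the paper argues in.
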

\section{Preliminaries.}

In this section, we will recall some basic knowledges of the graph theory (see Section 3.1.2 or Section 5.2 in \cite{bai2010spectral}) and introduce an operation of matrices.
\subsection{Some knowledges of Graph Theory.}\label{se:1}
Suppose that $i_1,\cdots,i_k$ are $k$ positive integers (not necessarily distinct) not
greater than $p$ and $j_1,\cdots,j_k$ are $k$ positive integers (not necessarily distinct)
not larger than $n$. For a sequence $\left(i_1,j_1,\cdots,i_k,j_k\right)$, draw two parallel lines,
referring to the $I$ line and the $J$ line. Plot $i_1,\cdots,i_k$ on the $I$ line and $j_1,\cdots,j_k$
on the $J$ line, and draw $k$ (down) edges from $i_u$ to $j_u$, $u = 1,\cdots, k$ and $k$ (up) edges from $j_u$ to $i_{u+1}, u = 1,\cdots, k$ (with the convention that $i_{k+1}=i_1$). The graph is denoted by $\mathbf{G\left(\mathcal{I}, \mathcal{J}\right)}$, where $\mathcal{I}= \left(i_1,\cdots, i_k\right)$ and $\mathcal{J}= \left(j_1,\cdots, j_k\right).$

Suppose the number of noncoincident $I$ -vertices is $r + 1$ and the number of noncoincident $J$ -vertices is $s$. A canonical graph can be defined as follows:
\begin{definition}\label{de:2}
A canonical $\Delta\left(k, r, s\right)$ can be directly defined in the following way:\\
1. Its vertex set $V = V_I +V_J$, where $V_I = \left\{1, \cdots, r +1\right\}$, called the $I$-vertices, and $V_J = \left\{1, \cdots, s\right\}$, called the $J$-vertices.\\
2. There are two functions, $f : \left\{1,\cdots, k\right\} \rightarrow \left\{1, \cdots, r+1\right\}$ and $g  : \left\{1, \cdots, k\right\}\rightarrow\left\{1, \cdots, s\right\}$, satisfying
\begin{align*}
&f\left(1\right) = 1 = g\left(1\right) = f\left(k+1\right),\\
&f\left(j\right)\le \max\left\{f\left(1\right), \cdots, f\left(j-1\right)\right\} + 1,\\
&g\left(j\right) \le \max\left\{g\left(1\right), \cdots, g\left(j-1\right)\right\} + 1.
\end{align*}
3. Its edge set $E = \left\{e_{1d}, e_{1u}, \cdots, e_{kd}, e_{ku}\right\}$, where $e_{1d}, \cdots, e_{kd}$ are called the
down edges and $e_{1u}, \cdots, e_{ku}$ are called the up edges.\\
4. $F \left(e_{jd}\right) = \left(f \left(j \right), g \left(j \right)\right)$ and $F \left(e_{ju} \right) = \left(g\left(j \right), f \left(j + 1\right)\right)$ for $j = 1, \cdots, k$.
\end{definition}
\begin{remark}
Two graphs are said to be isomorphic if one becomes the other by a suitable permutation on $\left(1, 2,\cdots, p\right)$ and a permutation on $\left(1, 2, \cdots, n\right)$. By Definition \ref{de:2}, we can easily obtain that there is only one canonical graph for each isomorphic class.
\end{remark}
\begin{remark}\label{re:1}
By Definition \ref{de:2}, the number of graphs in the isomorphic class associated with the canonical $\Delta\left(k, r, s\right)$-graph is
\begin{align*}
p\left(p-1\right)\cdots\left(p-r\right)n\left(n-1\right)\cdots\left(n-s+1\right)=p^{r+1}n^s\left[1+O\left(n^{-1}\right)\right].
\end{align*}
\end{remark}
If two edges have the same vertex sets, we say that the two edges coincide. We call that an edge $e_a$ is single up to $e_b,b\ge a$, when the edge $e_a$ does not coincide with any one among $e_1,\cdots ,e_b$ other than itself.
\begin{definition}
For a canonical graph, classify the edges into several types:\\
1. If $f\left(j+1\right)=\max\left\{f\left(1\right),\cdots,f\left(j\right)\right\}+1$, the edge $e_{ju}=\left(g\left(j\right),f\left(j+1\right)\right)$ is called an up innovation. And  if $g\left(j\right)=\max\left\{g\left(1\right),\cdots,g\left(j-1\right)\right\}+1$, the edge $e_{jd}=\left(f\left(j\right),g\left(j\right)\right)$ is called a down innovation. The two cases are both called a  ${\rm\mathcal{\mathbf T}_1}$ edge which leads to a new vertex.\\
2. An edge is called a ${\rm \mathbf{T}_3}$ edge if it coincides with an innovation that is single
until the ${\rm\mathbf{T}_3}$ edge appears. A ${\rm\mathbf{T}_3}$ edge $\left(g\left(j\right),f\left(j+1\right)\right)$ (or $\left(f\left(j\right),g\left(j\right)\right)$) is said to be irregular if there is only one innovation single up to $g\left(j\right)$ (or $f\left(j\right)$ ). All other ${\rm\mathbf{T}_3}$ edges are called regular ${\rm \mathbf{T}_3}$ edges.\\
3. All other edges are called ${\rm\mathbf{T}_4}$ edges.\\
4. The first appearance of a ${\rm \mathbf{T}_4}$ edge is called a ${\rm\mathbf{T}_2}$ edge. There are two cases: the first is the first appearance of a single noninnovation, and the second is the first appearance of an edge that coincides with a ${\rm\mathbf{T}_3}$ edge.
\end{definition}
A chain is a consecutive segment of $\mathbf{G\left(\mathcal{I}, \mathcal{J}\right)}$, i.e. $i_1j_1\cdots i_{\tau}$ ${\rm or} \ i_1j_1\cdots i_{\tau}j_{\tau}$.
\begin{lemma}\label{le:1}
Let $t$ denote the number of ${\rm\mathbf{T}_2}$ edges and $l$ denote the number of innovations in the chain $i_1j_1\cdots i_{\tau}$ $\left({\rm or} \ i_1j_1\cdots i_{\tau}j_{\tau}\right)$ that are single up to ${\tau}$ and have a vertex coincident with $f\left({\tau}\right)$ $\left({\rm or} \ g\left({\tau}\right)\right)$. Then $l\le t+1$.
\end{lemma}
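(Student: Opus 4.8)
The plan is to prove $l\le t+1$ by a structural analysis of the chain combined with an induction on its length, exploiting the $I$--$J$ symmetry of the canonical graph so that only the case of a chain terminating at an $I$-vertex $i_\tau$ (anchor $f(\tau)$) needs treatment; the case terminating at $j_\tau$ follows verbatim after interchanging the roles of $I$- and $J$-vertices. Write $\mathcal{L}$ for the set of innovations that are single up to $\tau$ and have a vertex coincident with $f(\tau)$, so $|\mathcal{L}|=l$, and let $t$ be the number of $\mathbf{T}_2$ edges in the chain. Throughout I would keep in mind the walk interpretation: the chain is the alternating walk $i_1\to j_1\to i_2\to\cdots\to i_\tau$ whose down edges emanate from $I$-vertices and whose up edges terminate at $I$-vertices.

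First I would pin down the two possible shapes of a member of $\mathcal{L}$. Since an up innovation $e_{ju}=(g(j),f(j+1))$ creates its terminal $I$-vertex $f(j+1)$, and a vertex is created exactly once, at most one member of $\mathcal{L}$ can be an up innovation, namely the unique up innovation first producing $f(\tau)$ (this case being absent when $f(\tau)=1$). Every other member of $\mathcal{L}$ is therefore a down innovation $e_{jd}=(f(j),g(j))$ with $f(j)=f(\tau)$, i.e.\ a down innovation emanating from the anchor vertex. Each such down innovation occurs at a visit of the walk to $f(\tau)$, and every visit after the first is reached through an up edge entering $f(\tau)$; because $f(\tau)$ already exists at that moment, each such entering edge is a non-innovation.

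The heart of the argument is then a counting step assigning to each member of $\mathcal{L}$ beyond the first a distinct $\mathbf{T}_2$ edge, yielding $l-1\le t$. Ordering the down-innovation members of $\mathcal{L}$ by position, between two consecutive ones the walk must leave and re-enter $f(\tau)$, and I would attach to the later member the up edge re-entering $f(\tau)$ just before it. At its first appearance this re-entering non-innovation edge is a $\mathbf{T}_4$ edge, hence a $\mathbf{T}_2$ edge; distinct members yield re-entries into $f(\tau)$ at distinct positions, and one checks their first appearances are distinct, so the assignment is injective. The surviving ``$+1$'' is exactly the first member of $\mathcal{L}$ (the creator of, or the initial visit to, $f(\tau)$), which needs no partner.

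The main obstacle is the case in which a re-entering up edge into $f(\tau)$ is not a $\mathbf{T}_4$ edge but a $\mathbf{T}_3$ edge. By definition such an edge coincides with an innovation still single when it appears, and the only innovation it can coincide with is the up innovation creating $f(\tau)$; but then that creating innovation ceases to be single before position $\tau$ and so drops out of $\mathcal{L}$. I would handle this by a bookkeeping dichotomy: if the creator is single up to $\tau$ then no re-entry is $\mathbf{T}_3$ and the clean pairing above applies, whereas if some re-entry is $\mathbf{T}_3$ then the creator is not counted, the available ``$+1$'' is forfeited, and I would recover the deficit by applying the inductive hypothesis to the sub-chain ending at that $\mathbf{T}_3$ re-entry into $f(\tau)$, whose terminal anchor is again $f(\tau)$. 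Making this reduction precise---so that the $\mathbf{T}_2$ edges produced on the two sides are genuinely distinct and nothing is double-counted---is the delicate point, and it is where the inductive hypothesis is used recursively; everything else is routine combinatorial bookkeeping on the walk.
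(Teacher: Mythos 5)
The paper does not actually prove Lemma \ref{le:1}; it imports it from Section 5.2 of Bai and Silverstein's book, where the argument goes through the tree structure of the innovations. Measured against that proof, your proposal has the right outer frame (at most one member of $\mathcal{L}$ is the up innovation creating $f(\tau)$, and every remaining member must be paired injectively with a $\mathbf{T}_2$ edge), but the pairing you choose breaks down at two concrete points. First, the injectivity claim fails: two distinct down-innovation members of $\mathcal{L}$ can be immediately preceded by re-entering up edges with the \emph{same} vertex pair $\{g,f(\tau)\}$; those two edges coincide, share one first appearance, and are therefore assigned the same $\mathbf{T}_2$ edge. Your parenthetical ``one checks their first appearances are distinct'' is exactly the false step. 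Second, the $\mathbf{T}_3$ analysis rests on a wrong claim: a re-entering edge $(g,f(\tau))$ that is a $\mathbf{T}_3$ edge need not coincide with the up innovation creating $f(\tau)$; it can just as well coincide with a \emph{down} innovation emanating from $f(\tau)$ whose new vertex is $g$. Such $\mathbf{T}_3$ re-entries can occur repeatedly while the creator of $f(\tau)$ stays single and remains in $\mathcal{L}$, so the dichotomy you propose, and the inductive reduction built on it (which you yourself flag as not made precise), does not cover the general case.

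The repair is to look \emph{forward} from each member rather than backward at its arrival edge. The innovations form a tree; a down-innovation member $e_{m_j}\in\mathcal{L}$ creates a new vertex $g_j$ and, together with the innovations later grown from it, a subtree $S_j$ attached to the rest of the tree only through the edge $\{f(\tau),g_j\}$. Since $e_{m_j}$ is single up to $\tau$, the walk may not re-traverse $\{f(\tau),g_j\}$, yet it must return to $f(\tau)$, either to start the next member of $\mathcal{L}$ or because the chain terminates at $f(\tau)$. Hence it must cross from $S_j$ to its complement along a vertex pair that is not an innovation pair; because one endpoint of that pair lies in $S_j$ and so was created at step $m_j$ or later, the first appearance of that pair occurs strictly after $m_j$ and before $m_{j+1}$, and it is a single non-innovation, i.e.\ a $\mathbf{T}_2$ edge. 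These $\mathbf{T}_2$ edges lie in disjoint intervals and are therefore distinct, giving $l\le t+1$ with the ``$+1$'' reserved for the possible creator of $f(\tau)$. This forward-looking escape argument is what your backward-looking assignment cannot supply.
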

\begin{lemma}\label{le:2}
The number of regular ${\rm\mathbf{T}_3}$ edges is not greater than twice the number of ${\rm \mathbf{T}_2}$ edges.
\end{lemma}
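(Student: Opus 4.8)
The plan is to bound the regular $\mathbf{T}_3$ edges by separating them according to whether they are up edges or down edges, and to show that each of these two families is no larger than the number $t$ of $\mathbf{T}_2$ edges; the factor $2$ in the statement then arises precisely as the sum of the two families. I would first isolate the mechanism that makes a $\mathbf{T}_3$ edge regular. By definition, a regular up $\mathbf{T}_3$ edge $\left(g\left(j\right),f\left(j+1\right)\right)$ coincides with a single innovation at a moment when there are at least two innovations single up to $g\left(j\right)$ that share the vertex $g\left(j\right)$, and symmetrically a regular down $\mathbf{T}_3$ edge $\left(f\left(j\right),g\left(j\right)\right)$ requires at least two such innovations at $f\left(j\right)$. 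Thus each regular $\mathbf{T}_3$ edge witnesses a genuine \emph{excess} of single innovations at its attachment vertex (at least two, rather than the single ``forced'' one that characterizes an irregular $\mathbf{T}_3$ edge) at the instant it appears.

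The key tool is Lemma~\ref{le:1}. For an up $\mathbf{T}_3$ edge I would apply it to the chain $i_1j_1\cdots i_\tau j_\tau$ terminating at the relevant $J$-vertex $g\left(\tau\right)$, so that the number $l$ of innovations single up to $\tau$ and incident to $g\left(\tau\right)$ satisfies $l\le t_\tau+1$, where $t_\tau$ counts the $\mathbf{T}_2$ edges in that chain; for a down $\mathbf{T}_3$ edge the symmetric form at $f\left(\tau\right)$ is used. Since regularity forces $l\ge 2$, this already gives $t_\tau\ge 1$, so no regular $\mathbf{T}_3$ edge can occur before a $\mathbf{T}_2$ edge has appeared, and more quantitatively the single innovations at the attachment vertex beyond the first one are ``paid for'' by $\mathbf{T}_2$ edges. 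I would then convert this into a charging argument: processing the edges in their natural order and maintaining at each vertex the stack of currently single innovations, a regular $\mathbf{T}_3$ edge removes one single innovation while leaving at least one behind, and the bound $l\le t_\tau+1$ guarantees that the number of such removals up to any stage, within the up family, never exceeds the number of $\mathbf{T}_2$ edges seen so far. This produces an injection of the regular up $\mathbf{T}_3$ edges into the $\mathbf{T}_2$ edges (the ``$+1$'' absorbing the lone forced innovation), hence at most $t$ of them, and likewise at most $t$ regular down $\mathbf{T}_3$ edges, for a total of at most $2t$.

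I expect the main obstacle to be the bookkeeping that underlies this charging, namely verifying injectivity within each family (equivalently, that no $\mathbf{T}_2$ edge is charged twice by the up family, nor twice by the down family, so that each $\mathbf{T}_2$ edge carries total charge at most $2$). The delicate point is that Lemma~\ref{le:1} controls single innovations only at the \emph{terminal} vertex of a chain, whereas a regular $\mathbf{T}_3$ edge generally appears in the interior of the global traversal; the care lies in selecting the correct chain, namely the prefix ending exactly at the attachment vertex at the moment the $\mathbf{T}_3$ edge is formed, and in checking that the $\mathbf{T}_2$ edges so assigned are never reused as the stack of single innovations evolves. Once the up family is handled, the down family follows verbatim with the roles of $f$ and $g$ interchanged, and adding the two estimates yields that the number of regular $\mathbf{T}_3$ edges is at most twice the number of $\mathbf{T}_2$ edges.
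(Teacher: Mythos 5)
The paper states Lemma \ref{le:2} without proof, quoting it from Section 5.2 of Bai and Silverstein's book, so there is no in-paper argument to compare against; judged on its own terms, your proposal has a genuine gap. The only fact you actually extract from Lemma \ref{le:1} is the pointwise statement that, at the moment a regular $\mathbf{T}_3$ edge appears, the chain up to that moment contains at least one $\mathbf{T}_2$ edge (from $2\le l\le t_\tau+1$). That is much weaker than the lemma: it is perfectly consistent with $t=1$ and many regular $\mathbf{T}_3$ edges, because nothing in the \emph{statement} of Lemma \ref{le:1} prevents the same early $\mathbf{T}_2$ edge from ``supporting'' every later regular $\mathbf{T}_3$ edge, whether at the same vertex (whose stock of single innovations can be replenished by further innovations departing from it on later visits) or at different vertices. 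The injectivity of your charging map --- that the removals within a family never outrun the running count of $\mathbf{T}_2$ edges --- is precisely the combinatorial content of Lemma \ref{le:2}, and you explicitly defer it (``the main obstacle'', ``the delicate point'') rather than prove it. A proof whose central step is announced as the part still to be checked is not a proof.

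There is a second, independent concern: your route to the factor $2$ --- bounding the up-edge and down-edge regular $\mathbf{T}_3$ families by $t$ \emph{separately} --- is a strictly stronger assertion than the lemma, and you offer no argument for it. Lemma \ref{le:1} makes no distinction between directions: the $t$ in $l\le t+1$ counts all $\mathbf{T}_2$ edges in the chain, up and down alike, so there is no a priori reason the two directional families are each controlled by the same budget $t$ without double counting; in the classical treatment (Yin--Bai--Krishnaiah; Bai--Silverstein, Section 5.2) the factor $2$ comes from a global assignment of regular $\mathbf{T}_3$ edges to $\mathbf{T}_2$ edges in which each $\mathbf{T}_2$ edge absorbs at most two charges, not from an up/down dichotomy. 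To repair the argument you would have to do the bookkeeping you postponed: follow the walk and show that each time the number of single innovations incident to the current vertex reaches $2$ or more --- which can recur at one vertex and migrate between vertices --- a \emph{fresh} $\mathbf{T}_2$ edge (or at worst a second charge on an existing one) is forced. That is an induction along the walk in the spirit of the \emph{proof} of Lemma \ref{le:1}, not a corollary of its statement.
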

\subsection{{\bf Diamond product}}
\begin{definition}\label{de:3}
Let $\mathbf A$ and $\mathbf B$ be two $p\times n$ quaternion matrices. Then, $\mathbf A\star \mathbf B=\left(a_{jl}b_{jl}\right)$, called Hadamard product for quaternion matrices. 
\end{definition}
\begin{lemma}\label{le:9}
Let $\mathbf A$ and $\mathbf B$ be two $p\times n$ quaternion matrices. Then,
\begin{align*}
\left\|\mathbf A\star\mathbf B\right\|_2\le\left\|\mathbf A\right\|_2\left\|\mathbf B\right\|_2
\end{align*}
where $\left\|\cdot\right\|_2$ denotes the $2$-norm of a matrix, i.e. $\left\|\cdot\right\|_2$ is equal to the maximum singular value of this matrix.
\end{lemma}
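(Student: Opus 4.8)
The plan is to prove the inequality in the Schur-product spirit: bound $\|\mathbf A\star\mathbf B\|_2$ by the product of the largest Euclidean row norm of $\mathbf A$ and the largest Euclidean column norm of $\mathbf B$, and then bound each of these by the corresponding spectral norm. Throughout I would work with the quaternionic Euclidean norm, using only its two basic properties: multiplicativity, $\|pq\|=\|p\|\,\|q\|$ for quaternions $p,q$, and the triangle inequality. I would also use that $\|\cdot\|_2$, being the largest singular value, admits the Rayleigh-quotient description $\|\mathbf M\|_2^2=\sup_{\mathbf x\ne\mathbf 0}\|\mathbf M\mathbf x\|^2/\|\mathbf x\|^2$ over quaternion vectors $\mathbf x$; this follows from the complex representation $\psi$, since the eigenvalues of $\psi(\mathbf M^*\mathbf M)$ occur in equal pairs, so that the quaternionic and complex Rayleigh quotients have the same supremum.

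Write $r(\mathbf A)=\max_j\big(\sum_l\|a_{jl}\|^2\big)^{1/2}$ and $c(\mathbf B)=\max_l\big(\sum_j\|b_{jl}\|^2\big)^{1/2}$. For the first step, fix a quaternion vector $\mathbf x=(x_1,\dots,x_n)^{\top}$. Using multiplicativity and the triangle inequality followed by the Cauchy--Schwarz inequality in the index $l$, the $j$-th coordinate of $(\mathbf A\star\mathbf B)\mathbf x$ satisfies
\begin{align*}
\Big\|\sum_l a_{jl}b_{jl}x_l\Big\|^2
\le\Big(\sum_l\|a_{jl}\|\,\|b_{jl}\|\,\|x_l\|\Big)^2
\le\Big(\sum_l\|a_{jl}\|^2\Big)\Big(\sum_l\|b_{jl}\|^2\|x_l\|^2\Big)
\le r(\mathbf A)^2\sum_l\|b_{jl}\|^2\|x_l\|^2.
\end{align*}
Summing over $j$ and interchanging the order of summation gives
\begin{align*}
\|(\mathbf A\star\mathbf B)\mathbf x\|^2
\le r(\mathbf A)^2\sum_l\|x_l\|^2\sum_j\|b_{jl}\|^2
\le r(\mathbf A)^2 c(\mathbf B)^2\sum_l\|x_l\|^2
= r(\mathbf A)^2 c(\mathbf B)^2\|\mathbf x\|^2,
\end{align*}
and hence, by the Rayleigh characterization, $\|\mathbf A\star\mathbf B\|_2\le r(\mathbf A)c(\mathbf B)$.

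It remains to show $r(\mathbf A)\le\|\mathbf A\|_2$ and $c(\mathbf B)\le\|\mathbf B\|_2$. For the rows, note that the $j$-th diagonal entry of the Hermitian matrix $\mathbf A\mathbf A^*$ equals $\sum_l a_{jl}\overline{a_{jl}}=\big(\sum_l\|a_{jl}\|^2\big)\mathbf e$, i.e. $r_j(\mathbf A)^2\mathbf e$. Since $\mathbf A\mathbf A^*$ is nonnegative definite, every diagonal entry is at most its largest eigenvalue $\|\mathbf A\|_2^2$ (again read off from $\psi(\mathbf A\mathbf A^*)$, whose diagonal carries the real values $r_j(\mathbf A)^2$); thus $r(\mathbf A)\le\|\mathbf A\|_2$. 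The same argument applied to the diagonal of $\mathbf B^*\mathbf B$ yields $c(\mathbf B)\le\|\mathbf B\|_2$. Combining the three inequalities gives $\|\mathbf A\star\mathbf B\|_2\le r(\mathbf A)c(\mathbf B)\le\|\mathbf A\|_2\|\mathbf B\|_2$, as desired.

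The routine part is the double Cauchy--Schwarz; the only point requiring care---and the step I expect to be the main obstacle---is the legitimacy of the quaternionic tools under non-commutativity: the Rayleigh-quotient description of the spectral norm and the bound of a diagonal entry by the top eigenvalue. Both, however, involve only the real-valued quantities $\|\cdot\|^2$ and the (necessarily real) eigenvalues of self-dual quaternion matrices, so passing through the complex representation $\psi$---where entries such as $\|a_{jl}\|^2\mathbf e$ become scalar blocks and eigenvalues appear in equal pairs---reduces every such claim to its standard Hermitian complex counterpart, and non-commutativity never enters the estimates themselves.
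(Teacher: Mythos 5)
Your proof is correct and follows essentially the same route as the paper's: both arguments reduce $\left\|\mathbf A\star\mathbf B\right\|_2$ via Cauchy--Schwarz to the product of the largest row norm of $\mathbf A$ and the largest column norm of $\mathbf B$, and then bound these by the respective spectral norms (the paper does this through the bilinear characterization $\sup_{\mathbf{e^*e=f^*f=1}}\left|\mathbf e^*\left(\mathbf A\star\mathbf B\right)\mathbf f\right|$ of Lemma \ref{le:8}, you through the equivalent Rayleigh-quotient form). Your explicit justification of the step $\max_j\sum_l\left\|a_{jl}\right\|^2\le\left\|\mathbf A\right\|_2^2$ via the diagonal entries of $\mathbf A\mathbf A^*$ is a detail the paper leaves implicit, but the substance is identical.
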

\begin{proof}
Applying Definition \ref{de:3} and Lemma \ref{le:8}, we have
\begin{align*}
&\left\|\mathbf A\star\mathbf B\right\|_2\\
=&\sup_{\mathbf{e^*e=f^*f=1}}\left|\mathbf{e^*\left(\mathbf A\star\mathbf B\right)f}\right|\\
\le&\sup_{\mathbf{e^*e=f^*f=1}}\sum_{l=1}^p\sum_{k=1}^n\left|e_l^*\left(a_{lk}b_{lk}\right)f_k\right|\\
\le&\sup_{\mathbf{e^*e=1}}\left(\sum_{l=1}^p\sum_{k=1}^ne_l^*a_{lk}a_{lk}^*e_l\right)^{1/2}
\sup_{\mathbf{f^*f=1}}\left(\sum_{l=1}^p\sum_{k=1}^nf_k^*b_{lk}^{*}b_{lk}f_k\right)^{1/2}\\
\le&\sup_{\mathbf{e^*e=1}}\left(\max_l\sum_{k=1}^n\left\|a_{lk}\right\|^2\sum_{l=1}^pe_l^*e_l\right)^{1/2}
\sup_{\mathbf{f^*f=1}}\left(\max_k\sum_{l=1}^p\left\|b_{lk}\right\|^2\sum_{k=1}^nf_k^*f_k\right)^{1/2}\\
\le&\left\|\mathbf A\right\|_2\left\|\mathbf B\right\|_2
\end{align*}
where $e=\left(e_1',\cdots,e_p'\right)'$, $e_l$ is a vector of order $2$ and $f=\left(f_1',\cdots,f_n'\right)'$, $f_k$ is a vector of order $2$.
\end{proof}
\begin{definition}\label{de:1}
Let $\mathbf H_j = \left(h_{\alpha\beta}^{\left(j\right)}\right), j = 1,2,\cdots,k,$ be $k$ quaternion matrices with dimensions $n_j\times n_{j+1}$, respectively. Define the {\bf Diamond product} of the $k$ matrices by
\begin{align*}
\mathbf H_1\diamond\cdots\diamond\mathbf H_{k}=\left(\sum{h_{\alpha t_2}^{\left(1\right)}h_{t_2 t_3}^{\left(2\right)}\cdots h_{t_{k-1}t_{k}}^{\left(k-1\right)}h_{ t_{k}\beta}^{\left(k\right)}}\right)
\end{align*}
where the summation runs for $t_{j}=1,2,\cdots,n_j, \ j=2,\cdots,k$, subject to
restrictions $\alpha\neq t_3,t_2\neq t_4 ,t_3\neq t_5,\cdots,t_{k-2}\neq t_{k}$ and $t_{k-1}\neq \beta $.
\end{definition}

\begin{lemma}\label{le:5}
Let $\mathbf H_j = \left(h_{\alpha\beta}^{\left(j\right)}\right), j = 1,2,\cdots,k,$ be $k$ quaternion matrices with dimensions $n_j\times n_{j+1}$, respectively. Then, we have
\begin{align*}
\left\|\mathbf H_1\diamond\cdots\diamond\mathbf H_{k}\right\|_2\le3^{k-1}\left\|\mathbf H_1\right\|_2\cdots\left\|\mathbf H_{k}\right\|_2.
\end{align*}

\end{lemma}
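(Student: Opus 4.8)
The plan is to induct on the number of factors $k$. The cases $k=1$ (where the Diamond product reduces to $\mathbf H_1$ itself) and $k=2$ (where $\mathbf H_1\diamond\mathbf H_2$ is $\mathbf H_1\mathbf H_2$ with its diagonal deleted, so that $\|\mathbf H_1\diamond\mathbf H_2\|_2\le\|\mathbf H_1\mathbf H_2\|_2+\|\mathrm{diag}(\mathbf H_1\mathbf H_2)\|_2\le 2\|\mathbf H_1\|_2\|\mathbf H_2\|_2$) are immediate. For the inductive step I would isolate one of the $k-1$ constraints $t_i\neq t_{i+2}$, namely the first one $\alpha\neq t_3$ in Definition \ref{de:1}. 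Writing the sum that is unrestricted in $t_3$ as an ordinary matrix product yields the identity
\[
\mathbf H_1\diamond\cdots\diamond\mathbf H_k=\mathbf H_1\,(\mathbf H_2\diamond\cdots\diamond\mathbf H_k)-R,
\]
where $R$ collects exactly the terms with $t_3=\alpha$, and where one checks from Definition \ref{de:1} that the surviving constraints $t_2\neq t_4,\dots,t_{k-1}\neq\beta$ are precisely those defining the Diamond product of the $k-1$ matrices $\mathbf H_2,\dots,\mathbf H_k$. The first term is then controlled by submultiplicativity of $\|\cdot\|_2$ and the induction hypothesis on $k-1$ factors, giving $\|\mathbf H_1(\mathbf H_2\diamond\cdots\diamond\mathbf H_k)\|_2\le 3^{\,k-2}\prod_{j=1}^k\|\mathbf H_j\|_2$.

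The substance is the estimate for $R$. With $t_3=\alpha$, the only surviving constraint still involving $t_2$ is $t_2\neq t_4$, so I would split $\sum_{t_2\neq t_4}=\sum_{t_2}-\sum_{t_2=t_4}$ and write $R=R'-R''$. In $R'$ the free sum $\sum_{t_2}h^{(1)}_{\alpha t_2}h^{(2)}_{t_2\alpha}=(\mathbf H_1\mathbf H_2)_{\alpha\alpha}$ factors out as the diagonal matrix $D=\mathrm{diag}\big((\mathbf H_1\mathbf H_2)_{\alpha\alpha}\big)$, leaving exactly the Diamond product of the $k-2$ matrices $\mathbf H_3,\dots,\mathbf H_k$; since $\|D\|_2\le\|\mathbf H_1\mathbf H_2\|_2\le\|\mathbf H_1\|_2\|\mathbf H_2\|_2$, the induction hypothesis yields $\|R'\|_2\le 3^{\,k-3}\prod_{j=1}^k\|\mathbf H_j\|_2$. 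In $R''$ the identification $t_2=t_4$ fuses the first three factors into one entrywise product $\mathbf C=\mathbf H_1\star\mathbf H_2^{\mathsf T}\star\mathbf H_3$, after which $R''$ is the Diamond product of the $k-2$ matrices $\mathbf C,\mathbf H_4,\dots,\mathbf H_k$; two applications of Lemma \ref{le:9} give $\|\mathbf C\|_2\le\|\mathbf H_1\|_2\|\mathbf H_2\|_2\|\mathbf H_3\|_2$, and the induction hypothesis again gives $\|R''\|_2\le 3^{\,k-3}\prod_{j=1}^k\|\mathbf H_j\|_2$. Adding the three pieces,
\[
\|\mathbf H_1\diamond\cdots\diamond\mathbf H_k\|_2\le\big(3^{\,k-2}+2\cdot 3^{\,k-3}\big)\prod_{j=1}^k\|\mathbf H_j\|_2=5\cdot 3^{\,k-3}\prod_{j=1}^k\|\mathbf H_j\|_2\le 3^{\,k-1}\prod_{j=1}^k\|\mathbf H_j\|_2,
\]
which is the asserted bound, in fact with room to spare.

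The main obstacle I anticipate is bookkeeping the constraints correctly when passing to $R'$ and $R''$: one must verify that, after eliminating $t_3$ and then $t_2$, the remaining restrictions are \emph{exactly} the defining restrictions of a shorter Diamond product, since this is what allows the induction hypothesis to be applied, and that the index sets line up so that the diagonal extraction and the Hadamard fusions are well defined. Here I would use that the factors alternate between the two dimension types ($n_1=n_3=\cdots$ and $n_2=n_4=\cdots$), as they do in every application to $\mathbf S_n$. A minor point to record separately is that $\|\mathbf H_2^{\mathsf T}\|_2=\|\mathbf H_2\|_2$, so that Lemma \ref{le:9} indeed bounds the fused matrix $\mathbf C$. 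Everything else is routine given Lemma \ref{le:9} and the submultiplicativity of the operator norm.
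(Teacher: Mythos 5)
Your proof is correct and follows essentially the same route as the paper's: the same induction on $k$, with the identical three-term decomposition $\mathbf H_1\diamond\cdots\diamond\mathbf H_k=\mathbf H_1\left(\mathbf H_2\diamond\cdots\diamond\mathbf H_k\right)-\mathrm{diag}\left(\left(\mathbf H_1\mathbf H_2\right)_{\alpha\alpha}\right)\left(\mathbf H_3\diamond\cdots\diamond\mathbf H_k\right)+\left(h^{(1)}_{jl}h^{(2)}_{lj}h^{(3)}_{jl}\right)\diamond\mathbf H_4\diamond\cdots\diamond\mathbf H_k$ (your $\mathbf H_1(\cdots)-R'+R''$) and the same appeal to Lemma \ref{le:9} to bound the fused Hadamard factor. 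The only cosmetic difference is that you control the block-diagonal pieces via the pinching inequality, yielding the constants $2$ and $5\cdot 3^{k-3}$, whereas the paper splits each diagonal quaternion block into its $\lambda$- and $\omega$-parts and gets $3$ and a slightly larger total; both land under $3^{k-1}$.
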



\begin{proof}
We shall use induction to prove this lemma.
\begin{itemize}
\item{i)} When $k=1$, the conclusion is trivially true. When $k=2$, denote $\mathbf H_1\mathbf H_2=\left(\mathbf Q_{jl}\right)$ where $\mathbf Q_{jl}=\left(\begin{array}{cc} \lambda_{jl}&\omega_{jl}\\-\overline{\omega_{jl}}&\overline{\lambda_{jl}} \end{array}\right)$.
    It follows that
\begin{align*}
\left\|\mathbf H_1\diamond\mathbf H_2\right\|_2=&\left\|\mathbf H_1\mathbf H_2-{\rm diag}\left(\mathbf Q_{11},\cdots,\mathbf Q_{pp}\right)\right\|_2\\
\le&\left\|\mathbf H_1\mathbf H_2\right\|_2+\left\|{\rm diag}\left(\mathbf Q_{11},\cdots,\mathbf Q_{pp}\right)\right\|_2\\
\le&\left\|\mathbf H_1\mathbf H_2\right\|_2+\left\|{\rm diag}\left(\mathbf H_1\mathbf H_2\right)\right\|_2+\max_{j}\left|w_{jj}\right|\\
\le&3\left\|\mathbf H_1\right\|_2\left\|\mathbf H_2\right\|_2.
\end{align*}
\item{ii)} Let $k>1$. Note that
\begin{align*}
&\mathbf H_1\diamond\cdots\diamond\mathbf H_{k}\\
=&\mathbf H_1\left(\mathbf H_2\diamond\cdots\diamond\mathbf H_{k}\right)-{\rm diag}\left(\mathbf Q_{11},\cdots,\mathbf Q_{pp}\right)\left(\mathbf H_3\diamond\cdots\diamond\mathbf H_{k}\right)\\
&+\left(h_{jl}^{\left(1\right)}h_{lj}^{\left(2\right)}h_{jl}^{\left(3\right)}\right)\diamond\mathbf H_4\diamond\cdots\diamond\mathbf H_{k}.
\end{align*}
Here, the $\left(j,l\right)$ entry of the matrix $\left(h_{jl}^{\left(1\right)}h_{lj}^{\left(2\right)}h_{jl}^{\left(3\right)}\right)$ is zero if $l>n_2$ or $j>n_3$. Using Lemma \ref{le:9}, one has $$\left\|\left(h_{jl}^{\left(1\right)}h_{lj}^{\left(2\right)}h_{jl}^{\left(3\right)}\right)\right\|_2
\le\left\|\mathbf H_1\right\|_2\left\|\mathbf H_2'\right\|_2\left\|\mathbf H_3\right\|_2=\left\|\mathbf H_1\right\|_2\left\|\mathbf H_2\right\|_2\left\|\mathbf H_3\right\|_2.$$
\end{itemize}
By induction, we complete the proof of Lemma \ref{le:5}.
\end{proof}
\section{Proof of Theorem \ref{th:1}.}\label{se:3}

By Definition \ref{de:1}, we denote
\begin{align}\label{al:9}
\mathbf R_n\left(l\right)=n^{-l}\overbrace{\mathbf X_n\diamond\mathbf X_n^*\diamond\cdots\diamond\mathbf X_n}^{l \ \mathbf X_n}\diamond\mathbf X_n^*
\end{align}
which implies that $\mathbf R_n\left(1\right)=\mathbf S_n-n^{-1}{\rm diag}\left(\mathbf X_n\mathbf X_n^*\right)$. Thus, we shall complete the proof by the following two steps:
\begin{itemize}
\item a) Firstly, we derive the estimate of the norm of $\mathbf R_n\left(l\right)$. The aim of subsequent lemmas is to estimate of the norm of $\left(\mathbf R_n-y\sigma^2\mathbf I_{p}^Q\right)^k$  by using the estimate on $\mathbf R_n\left(l\right)$ (see Section 4.1);
 \item b) Applying these lemmas, we can easily get the bound of $\|\mathbf R_n-y\sigma^2\mathbf I_{p}^Q\|_2$. Together with $\left\|\mathbf S_n-\mathbf R_n-\sigma^2 \mathbf I_{p}^Q\right\|_2$, we obtain the bound of $\left\|\mathbf S_n-\sigma^2\left(1+y\right)\mathbf I_{p}^Q\right\|_2$ (see Section 4.2).
\end{itemize}

\subsection{Some lemmas}\label{se:2}
\begin{lemma}\label{le:3}
Under the conditions of Theorem \ref{th:1}, we have
\begin{align}\label{al:3}
\limsup_{n\to\infty}\left\|\mathbf R_n\left(l\right)\right\|_2\le\left(2l+1\right)\left(l+1\right)y^{\left(l-1\right)/2}\sigma^{2l}
\quad a.s..
\end{align}
\end{lemma}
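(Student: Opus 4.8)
The plan is to prove the bound by the high-moment (trace) method, using the graph-theoretic machinery of Section~\ref{se:1}. First observe that, because $(\mathbf H_1\diamond\cdots\diamond\mathbf H_k)^*=\mathbf H_k^*\diamond\cdots\diamond\mathbf H_1^*$ and the factor pattern $\mathbf X_n,\mathbf X_n^*,\dots,\mathbf X_n,\mathbf X_n^*$ is palindromic under this operation, the matrix $\mathbf R_n(l)$ is Hermitian. Hence for every positive integer $k$ the largest singular value satisfies $\|\mathbf R_n(l)\|_2^{2k}\le{\rm tr}\,\mathbf R_n(l)^{2k}$, and it suffices to bound $\gamma_{n,k}:={\rm E}\,{\rm tr}\,\mathbf R_n(l)^{2k}$ for a sequence $k=k_n\to\infty$ chosen below, after which Markov's inequality and the Borel--Cantelli lemma will deliver the almost sure estimate.

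Before expanding $\gamma_{n,k}$ I would truncate and recentre the entries: replace $x_{jl}$ by $x_{jl}I(\|x_{jl}\|\le\eta_n\sqrt n)$ with $\eta_n\downarrow0$ slowly, then subtract the mean and rescale so that the variance is again $\sigma^2$. Condition~(4) of Theorem~\ref{th:1}, through the dominating variable $\xi$ with finite fourth moment, ensures via Borel--Cantelli that the truncated and original matrices differ in only finitely many entries almost surely, so that the $\limsup$ of the norm is unchanged; conditions~(2)--(3) bound the recentring and rescaling errors. This reduces the problem to entries that are bounded by $\eta_n\sqrt n$, have mean zero and variance $\sigma^2$, and still satisfy a uniform fourth-moment bound.

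Expanding $\gamma_{n,k}={\rm E}\,{\rm tr}\,\mathbf R_n(l)^{2k}$ writes the trace as a sum of products of $4lk$ entries of $\mathbf X_n$ along closed paths, each path encoded by a graph $\mathbf G(\mathcal I,\mathcal J)$ as in Section~\ref{se:1}, where the index restrictions of Definition~\ref{de:1} forbid exactly the consecutive coincidences that would create the removed diagonal (centring) terms. By independence and ${\rm E}\,x_{jl}=0$ only graphs in which every edge is traversed at least twice survive. Classifying the edges into innovations ($\mathbf T_1$), $\mathbf T_2$, regular/irregular $\mathbf T_3$, and $\mathbf T_4$ edges and invoking Lemma~\ref{le:1} and Lemma~\ref{le:2} to control the number of regular $\mathbf T_3$ edges by twice the number of $\mathbf T_2$ edges, one shows that any graph carrying a $\mathbf T_2$ (hence a $\mathbf T_4$) edge loses a factor of $\eta_n$ or of $n^{-1}$ and is negligible. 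The leading graphs are the tree-like ones built solely from innovations and irregular $\mathbf T_3$ edges; for these the numbers $(r,s)$ of noncoincident $I$- and $J$-vertices are pinned down, Remark~\ref{re:1} counts the graphs in each isomorphism class as $p^{\,r+1}n^{\,s}[1+O(n^{-1})]$, and each doubled edge contributes $\sigma^2$. Matching these powers of $p$ and $n$ against the normalization $n^{-2lk}$ (and using $p\sim yn$) collapses the sum to $\gamma_{n,k}\le p\,C^{2k}\,(1+o(1))$ with $C=(2l+1)(l+1)y^{(l-1)/2}\sigma^{2l}$, the combinatorial factor $(2l+1)(l+1)$ being precisely the number of admissible leading tree shapes compatible with the alternating factor pattern per traversal.

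Finally, Markov's inequality yields ${\rm P}(\|\mathbf R_n(l)\|_2>(1+\varepsilon)C)\le((1+\varepsilon)C)^{-2k}\gamma_{n,k}\le p\,(1+\varepsilon)^{-2k}(1+o(1))$; choosing $k=k_n\sim c\log n$ with $c$ large enough (depending on $\varepsilon$) makes these probabilities summable in $n$, and Borel--Cantelli together with the arbitrariness of $\varepsilon>0$ gives $\limsup_n\|\mathbf R_n(l)\|_2\le C$ almost surely. Note that the crude submultiplicative bound of Lemma~\ref{le:5} is far too weak here, since it grows like $3^{2l-1}$, so the whole difficulty lies in the combinatorics of the third paragraph: verifying that the diamond restrictions kill exactly the diagonal contributions, that every $\mathbf T_2$/$\mathbf T_4$ edge forces a genuinely negligible gain once $\eta_n$ is balanced against $k_n\sim\log n$, and---most delicately---extracting the \emph{exact} constant $(2l+1)(l+1)$ by enumerating the leading tree-like graph shapes, rather than settling for a mere order-of-magnitude bound.
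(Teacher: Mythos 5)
Your overall strategy coincides with the paper's: truncate and recentre at level $\delta_n\sqrt n$ using condition (4) and Borel--Cantelli, exploit that $\mathbf R_n(l)$ is Hermitian to bound $\|\mathbf R_n(l)\|_2^{2m}$ by ${\rm tr}\,\mathbf R_n(l)^{2m}$ with $m\to\infty$, expand the trace over graphs $\mathbf G(\mathcal I,\mathcal J)$, and finish with Markov and Borel--Cantelli. But the one genuinely hard step --- the combinatorial estimate of ${\rm E}\,{\rm tr}\,\mathbf R_n(l)^{2m}$ --- is both left undone and mischaracterized in a way that would not produce the stated bound. You assert that graphs carrying $\mathbf T_2$ (hence $\mathbf T_4$) edges are ``negligible'' and that the sum ``collapses'' to the leading tree-like graphs, whose number per traversal you claim is precisely $(2l+1)(l+1)$. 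Neither claim is right. When the power $2m$ grows with $n$ (as it must, $m\gtrsim\log n$, for summability), the non-tree graphs cannot be discarded term by term: their number grows with $m$, and their total contribution must be bounded by tracking, for each graph, the number $k$ of innovations and the number $t$ of $\mathbf T_2$ edges, using Lemmas \ref{le:1} and \ref{le:2} to cap the regular $\mathbf T_3$ choices by $(t+1)^{2(4ml-2k)}$, and then optimizing the resulting factor $(t+1)^{3(4ml-2k)}(\delta_n\sqrt n)^{-t}$ via the inequality $\alpha^{-(t+1)}(t+1)^\beta\le(\beta/\log\alpha)^\beta$ under the coupled constraints $m/\log n\to\infty$ and $m\delta_n^{1/3}/\log n\to0$. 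This balancing is the substance of the proof and is only gestured at in your write-up (``once $\eta_n$ is balanced against $k_n$'').

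Moreover, the constant $(2l+1)(l+1)$ does not arise from an exact enumeration of leading tree shapes. In the paper it comes from the crude bounds $\binom{2l+1}{2a_j+1}\le(2l+1)^{2a_j+1}$ together with the $l+1$ possible values of $a_j$ in each of the $2m$ sub-blocks; it is a deliberate overcount. An exact enumeration would give a different number (the count of admissible innovation patterns per block is $\sum_{a}\binom{2l+1}{2a+1}=4^{l}$, and the true limit of $\|\mathbf R_n(l)\|_2$ is strictly smaller than the stated bound --- for $l=1$ it is $2\sqrt y+y\le 3$ versus the bound $6$), so deriving the constant the way you propose would not yield $(2l+1)(l+1)y^{(l-1)/2}\sigma^{2l}$. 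Since the lemma only needs an upper bound (the polynomial prefactor in $l$ washes out later when $k$-th roots are taken in the proof of Theorem \ref{th:1}), the overcounting route is the correct one. Finally, you treat the entries as scalars; in the quaternion setting one must first pass from $\left|{\rm tr}\bigl(x_{i_1j_1}\overline x_{i_2j_1}\cdots\overline x_{i_1j_{2ml}}\bigr)\right|$ to $2\prod\|x_{i_uj_v}\|$ using the identity $\det(x)=\|x\|^2$ of (\ref{al:11}); without this step the per-graph bound $2(\delta_n\sqrt n)^{4ml-2k-t}$ is not available.
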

\begin{proof}
By (\ref{al:9}),
\begin{align*}
\mathbf R_n\left(l\right)=n^{-l}\left(\sum x_{av_1}
\overline x_{u_1v_1}x_{u_1v_2}\overline x_{u_2v_2}\cdots x_{u_{l-1}v_l}\overline x_{bv_l}\right)
\end{align*}
where the summation $\sum$ runs over for $v_1,\cdots,v_l=1,\cdots,n$ and $u_1,\cdots,u_{l-1}=1,\cdots,p$ subject to the restriction $a\neq u_1,u_1\neq u_2,\cdots,u_{l-1}\neq b$ and $v_1\neq v_2,v_2\neq v_3,\cdots,v_{l-1}\neq v_l$.

Without loss of generality, we assume $\sigma=1$. At first, we will truncate and centralize the quaternion random variables without changing the bound of $\|\mathbf R_n\left(l\right)\|_2$.

Since ${\rm E}\left|\xi\right|^4<\infty$,  for any $\delta > 0$, we have
\begin{align*}
\sum_{k=1}^{\infty}\delta^{-4}2^{2k}{\rm P}\left(\left|\xi\right|>\delta2^{k/2}\right)<\infty.
\end{align*}
Then, we can select a slowly decreasing sequence of constants $\delta_{2^k}\to 0$, $2^{k/2}\delta_{2^k}\uparrow \infty$, and  such that
\begin{align}\label{al:10}
\sum_{k=1}^{\infty}\delta_{2^k}^{-4}2^{2k}{\rm P}\left(\left|\xi\right|>\delta_{2^k}2^{k/2}\right)<\infty.
\end{align}
Let $\delta_n=\delta_{2^k}$ for $2^k<n\le 2^{k+1}$ and let $\widehat x_{uv}=x_{uv}I\left(\left\|x_{uv}\right\|\le \delta_n\sqrt n\right)$ ($\delta_n\sqrt n\uparrow \infty$),  $\widehat{\mathbf X}_n=\left(\widehat x_{uv}\right)$, and
$$\widehat {\mathbf R}_n\left(l\right)=n^{-l}\overbrace{{\widehat{\mathbf X}_n\diamond\widehat{\mathbf X}_n^*\diamond\cdots\diamond\widehat{\mathbf X}_n}}^{l\ \widehat{\mathbf X}_n}\diamond\widehat{\mathbf X}_n^*.$$
Together with (\ref{al:17}) and (\ref{al:10}), one has
\begin{align*}
&{\rm P}\left(\widehat{\mathbf R}_n\neq\mathbf R_n,{\rm i.o.}\right)\\
=&\lim_{K\to\infty}{\rm P}\left(\bigcup_{k=K}^{\infty}\bigcup_{2^k< n\le2^{k+1}}\bigcup_{u\le p,v\le n}\left(\left\|x_{uv}\right\|>\delta_n \sqrt n\right)\right)\\
\le&\lim_{K\to\infty}\sum_{k=K}^{\infty}{\rm P}\left(\bigcup_{2^k< n\le2^{k+1}}\bigcup_{u\le (y+1)2^{k+1},v\le 2^{k+1}}\left(\left\|x_{uv}\right\|>\delta_{2^k} 2^{k/2}\right)\right)\\
=&\lim_{K\to\infty}\sum_{k=K}^{\infty}{\rm P}\left(\bigcup_{u\le (y+1)2^{k+1},v\le 2^{k+1}}\left(\left\|x_{uv}\right\|>\delta_{2^k} 2^{k/2}\right)\right)\\
\le&\lim_{K\to\infty}L(y+1)\sum_{k=K}^{\infty}2^{2k+2}{\rm P}\left(\left|\xi\right|>\delta_{2^k} 2^{k/2}\right)\to 0.
\end{align*}
Thus we only need to show that (\ref{al:3}) holds for the matrix $\widehat{\mathbf R}_n\left(l\right)$.

Let $\widetilde x_{uv}=\widehat x_{uv}-{\rm E}\left(\widehat x_{uv}\right)$, $\widetilde{\mathbf X}_n=\left(\widetilde x_{uv}\right)$, and
$$\widetilde {\mathbf R}_n\left(l\right)=n^{-l}{\widetilde{\mathbf X}_n\diamond\widetilde{\mathbf X}_n^*\diamond\cdots\diamond\widetilde{\mathbf X}_n}\diamond\widetilde{\mathbf X}_n^*.$$
Suppose (\ref{al:3}) is true for the matrix $\widetilde {\mathbf R}_n\left(l\right)$, then we assert that, for all $l\ge0$,
\begin{align}\label{al:4}
\left\|\widetilde {\mathbf R}_n\left(l\right)-\widehat {\mathbf R}_n\left(l\right)\right\|_2=0 \quad { a.s..}
\end{align}
In fact, $\widetilde {\mathbf R}_n\left(l\right)-\widehat {\mathbf R}_n\left(l\right)$ can be written as a sum of $\diamond$ products of matrices $\frac{1}{\sqrt n}\widetilde{\mathbf X}_n,{\rm E}\left(\frac{1}{\sqrt n}\widehat{\mathbf X}_n\right)$ or their complex conjugate transpose.
In each product, at least one of them is ${\rm E}\left(\frac{1}{\sqrt n}\widehat{\mathbf X}_n\right)$ or its complex conjugate transpose. Next, we estimate the bounds of $\left\|\frac{1}{\sqrt n}\widetilde{\mathbf X}_n\right\|_2$ and $\left\|{\rm E}\left(\frac{1}{\sqrt n}\widehat{\mathbf X}_n\right)\right\|_2$. If (\ref{al:3}) is true for the matrix $\widetilde {\mathbf R}_n\left(l\right)$, we have
\begin{align}\label{al:18}
&\limsup\left\|n^{-1/2}\widetilde{\mathbf X}_n\right\|_2^2\notag\\
=&\limsup\left\|\widetilde{\mathbf R}_n\left(1\right)+{\rm diag}\left(\frac{1}{n}\sum_{v}\left\|\widehat x_{uv}-{\rm E}\left(\widehat x_{uv}\right)\right\|^2I_2,u\le p\right)\right\|_2\notag\\
\le &6+\limsup\frac{1}{n}\max_{u\le p}\sum_{v=1}^n\left[\left\|\widehat x_{uv}\right\|^2+2\left\|\widehat x_{uv}\right\|\left\|{\rm E}\widehat x_{uv}\right\|+\left\|{\rm E}\widehat x_{uv}\right\|^2\right]
\end{align}
Denoting $h_v=\left\|\widehat x_{uv}\right\|^2-{\rm E}\left\|\widehat x_{uv}\right\|^2$ and using the fact that for all $k\geq 1$, $k!\geq\left(k/3\right)^k$, we have
\begin{align}
{\rm E}\left(\frac{1}{n}\sum\limits_{v=1}^n h_v\right)^{2m} &=\frac{1}{ n^{2m}}\sum\limits_{m_1+\ldots+m_n=2m}\frac{2m!}{{m_1}!\ldots{m_n}!}{\rm E}h_1^{m_1}\ldots {\rm E}h_n^{m_n} \notag\\
&\leq \frac{1}{ n^{2m}}\sum\limits_{k=1}^m \sum_{\substack{m_1+\ldots +m_k=2m \\ m_j\geq 2}}\frac{2m!}{k!m_1! \ldots m_k!}\prod_{j=1}^k \left(\sum_{v=1}^n {\rm E}h_v^{m_j}\right)\notag
 \\
&\leq  \sum_{k=1}^m  n^{-2m} k^{2m}\left(k!\right)^{-1}\left(2\delta_n^2 n\right)^{2m-2k}M^k n^k \notag\\
&\leq \sum_{k=1}^m\left(\frac{3Mk}{n}\right)^k\left({4\delta_n^4k^2}\right)^{m-k}\notag\\
&\leq \sum_{k=1}^m\left(\frac{3Mm}{n}\right)^k\left({4\delta_n^4k^2}\right)^{m-k}.\label{eqm1}
\end{align}
Select $m=\left[\log n\right]$ and let $f\left(k\right)=k\log\left(3Mm/n\right)+\left(m-k\right)\log\left(4\delta_n^4k^2\right)$, then $f'\left(k\right)$ (the derivative of $f\left(k\right)$) is
\begin{align*}
&\log\left(3Mm/n\right)-\log\left(4\delta_n^4k^2\right)+2\left(m-k\right)/k\\
=&\log\left(3Mm/\left(4n\delta_n^4\right)\right)-2\log k+2\left(m-k\right)/k\\
\le &-\frac{3}{4}\log n-2\log k+2\left(m-k\right)/k.
 \end{align*}
 We claim that the maximum term on the right hand side of (\ref{eqm1}) can only be $k=1$ or $2$. In fact, when $k>2$,
 \begin{align*}
 f'\left(k\right)\le&-\frac{3}{4}\log n-2\log k+2\left(m-k\right)/3\\
 \le&-\frac{1}{12}\log n-2\log k<0.
 \end{align*}
 Thus, we obtain for any fixed $t>0$
 \begin{align*}
 {\rm E}\left(\frac{1}{n}\sum\limits_{v=1}^n h_v\right)^{2m} &\le m\left(\frac{3M}{n}\left(4\delta_n^4\right)^{m-1}+\left(\frac{6M}{n}\right)^2\left(16\delta_n^4\right)^{m-2}\right)\\
 \le&\delta_n^{4m}=o\left(n^{-t}\right).
 \end{align*}
From the inequality above with $t >2$ and  Borel-Cantelli lemma, we have
\begin{align}\label{al:19}
\max_{u\le p}\left|\frac{1}{n}\sum_v^{}\left(\left\|\widehat x_{uv}\right\|^2-{\rm E}\left\|\widehat x_{uv}\right\|^2\right)\right|\to 0, \ { a.s..}
\end{align}
Thus, (\ref{al:18}) turns into
\begin{align*}
&\limsup\left\|n^{-1/2}\widetilde{\mathbf X}_n\right\|_2^2\notag\\
\le &6+\limsup\frac{1}{n}\max_{u\le p}\sum_{v=1}^n\left[{\rm E}\left\| x_{uv}\right\|^2+2\delta_n\sqrt n\left\|{\rm E}\widehat x_{uv}\right\|\right]\\
\le &6+\limsup\frac{1}{n}\max_{u\le p}\sum_{v=1}^n\left[{\rm E}\left\| x_{uv}\right\|^2+2{\rm E}\left\| x_{uv}\right\|^2\right]\\
\le&6+3=9.
\end{align*}
 And
\begin{align*}
\left\|{\rm E}\left(\frac{1}{\sqrt n}\widehat{\mathbf X}_n\right)\right\|_2\le\sqrt{\frac{1}{n}\max_{u\le p}\sum_{v}\left\|{\rm E}\widehat x_{uv}\right\|^2}\le\frac{M}{\delta_n\sqrt n}=o\left(1\right).
\end{align*}
Combining the above with Lemma \ref{le:5}, the proof of (\ref{al:4}) is complete. Therefore it suffices to show that (\ref{al:3}) for the matrix $\widetilde{\mathbf R}_n\left(l\right)$ is true.

For
brevity, we still use ${\mathbf R}_n\left(l\right)$ and $x_{uv}$ to denote the matrix and variables after
truncation and recentralization. We further assume that:
\begin{equation}\label{eq:1}
\begin{split}
&{\rm (i)}\quad{\rm E}\left(x_{uv}\right)=0,{\rm E}\left\|x_{uv}\right\|^2\le 1,\\
&{\rm (ii)}\quad{\rm E}\left\|x_{uv}\right\|^l\le \left(\delta_n\sqrt n\right)^{l-3}, \ {\rm for \ all \ }l\ge3.
\end{split}
\end{equation}

We will complete the proof under the additional conditions (\ref{eq:1}). Select a sequence of even integers $m$ with the properties $m/\log n\to\infty$ and $m\delta^{1/3}/\log n\to 0$. For any $\eta >\left(2l+1\right)\left(l +1\right)y^{\left(l-1\right)/2}$, we have
\begin{align}\label{al:8}
{\rm P}\left(\left\|\mathbf R_n\left(l\right)\right\|_2\ge\eta\right)\le \eta^{-2m}{\rm E}\left(\left\|\mathbf R_n\left(l\right)\right\|_2^{2m}\right)\le\eta^{-2m}{\rm E}{\rm tr}\mathbf R_n^{2m}\left(l\right).
\end{align}
We only need to estimate
\begin{align*}
{\rm E}{\rm tr}\mathbf R_n^{2m}\left(l\right)=n^{-2ml}\sum{\rm Etr}\left(x_{i_1j_1}
\overline x_{i_2j_1}x_{i_2j_2}\overline x_{i_3j_2}\cdots x_{i_{2ml}j_{2ml}}\overline x_{i_1j_{2ml}}\right)
\end{align*}
where the summation runs over all integers $i_1,\cdots,i_{2ml}$ from $\left\{1,2,\cdots,p\right\}$ and $j_1,\cdots,j_{2ml}$ from $\left\{1,2,\cdots,n\right\}$ subject to the conditions that, for any $\alpha=0,1,\cdots,2m-1$,
\begin{equation}\label{al:5}
\begin{split}
&i_{\alpha l+1}\neq i_{\alpha l+2},i_{\alpha l+2}\neq i_{\alpha l+3},\cdots,i_{\left(\alpha +1\right)l}\neq i_{\left(\alpha +1\right)l+1};\\
&j_{\alpha l+1}\neq j_{\alpha l+2},j_{\alpha l+2}\neq j_{\alpha l+3},\cdots,j_{\left(\alpha +1\right)l-1}\neq i_{\left(\alpha+1\right)l}.
\end{split}
\end{equation}
Defining graphs in accordance with the last section, the equality above can be rewritten as
\begin{small}
\begin{align}\label{al:6}
{\rm E}\left({\rm tr}\mathbf R_n^{2m}\left(l\right)\right)=n^{-2ml}\sum_{\mathbf{G}}\sum_{\mathbf{\mathcal{I},\mathcal{J}}}{\rm Etr}\left(x_{i_1j_1}
\overline x_{i_2j_1}x_{i_2j_2}\overline x_{i_3j_2}\cdots x_{i_{2ml}j_{2ml}}\overline x_{i_1j_{2ml}}\right)
\end{align}
\end{small}
\noindent
where $\mathbf{G}$ runs over all canonical graphs and $\mathbf G\left(\mathcal{I,J}\right)$ runs over the given isomorphic class. Obviously, if $\mathbf G$ has a single edge, the terms corresponding to this graph are zero. Thus, we need only to estimate the sum of all those terms whose $\mathbf G$ has no single edge.

Now, we begin to estimate the right-hand side of (\ref{al:6}).
Noticing that $x_{i_1j_1}
\overline x_{i_2j_1}x_{i_2j_2}\overline x_{i_3j_2}\cdots x_{i_{2ml}j_{2ml}}\overline x_{i_1j_{2ml}}$ can be written as $\left(\begin{array}{cc}
\alpha&\beta\\
-\bar\beta&\bar \alpha
\end{array}\right)$, and according to (\ref{al:11}), one has
\begin{align*}
&\left|{\rm tr}\left(x_{i_1j_1}
\overline x_{i_2j_1}x_{i_2j_2}\overline x_{i_3j_2}\cdots x_{i_{2ml}j_{2ml}}\overline x_{i_1j_{2ml}}\right)\right|\\
=&\left|\alpha+\bar\alpha\right|\le2\left(\left|\alpha\right|^2+\left|\beta\right|^2\right)^{1/2}\\
=&2\left\{\det{\left(\begin{array}{cc}
\alpha&\beta\\
-\bar\beta&\bar \alpha
\end{array}\right)}\right\}^{1/2}\\
\le&2\left\|{\rm det}\left(x_{i_1j_1}\right)
{\rm det}\left(x_{i_2j_1}\right)\cdots {\rm det}\left(x_{i_{2ml}j_{2ml}}\right){\rm det}\left(x_{i_1j_{2ml}}\right)\right\|\\
\le&2\left\|x_{i_1j_1}\right\|\left\|
 x_{i_2j_1}\right\|\cdots\left\| x_{i_{2ml}j_{2ml}}\right\|\left\|\overline x_{i_1j_{2ml}}\right\|
\end{align*}
which is similar to Lemma 3.6 in \cite{yin2013limit}.
Assume $r$ and $s$ be the number of up and down innovations, respectively.  Let $k=r+s$ denote the total number of innovations and $t$ denote the number of $\mathbf{T}_2$ edges.
Due to the inequality above, we get
\begin{align}\label{al:7}
&{\rm E}\left|{\rm tr}\left(x_{i_1j_1}
\overline x_{i_2j_1}x_{i_2j_2}\overline x_{i_3j_2}\cdots x_{i_{2ml}j_{2ml}}\overline x_{i_1j_{2ml}}\right)\right|\notag\\
\le&2{\rm E}\left\|x_{i_1j_1}\right\|\left\|
\overline x_{i_2j_1}\right\|\cdots\left\| x_{i_{2ml}j_{2ml}}\right\|\left\|\overline x_{i_1j_{2ml}}\right\|\notag\\
\le&2\left(\delta_n\sqrt n\right)^{4ml-2k-t}.
\end{align}

By Remark \ref{re:1}, we know that the number of graphs of each isomorphic class is less than $n^sp^{r+1}$. Thus, (\ref{al:6}) can be estimated by
 \begin{align}\label{al1}
{\rm E}\left({\rm tr}\mathbf R_n^{2m}\left(l\right)\right)\le 2n^{-2ml}\sum_{\mathbf{G}}n^sp^{r+1}\left(\delta_n\sqrt n\right)^{4ml-2k-t}.
 \end{align}
 In the following, we only consider the number of canonical graphs without single edges.
 Due to condition (\ref{al:5}), we split the graph $\mathbf G$ into $2m$ subgraphs $\mathbf G_1, \cdots,\mathbf G_{2m}$. Within each subgraph, except the first and the last edges, all edges do not coincide with their adjacent (prior to or behind) edges, that means, every ${\rm \mathbf{T}_1}$ edge must be followed by a
 ${\rm \mathbf{T}_4}$ or  ${\rm \mathbf{T}_1}$ edge, unless it is the last edge of $\mathbf G_j$. Let $a_j$ denote the number of pairs of consecutive edges $\left(t_1,t_4\right)$ in the subgraph $\mathbf G_j$ in which $t_1$ is a ${\rm \mathbf{T}_1}$ edge and $t_4$ is a ${\rm \mathbf{T}_4}$ edge. Then the number of consecutive innovations in $\mathbf G_j$ is not more than $a_j$ or $a_{j}+1$ (the latter
happens when the last edge of $\mathbf G_j$ is an innovation). Hence, the number of
ways to arrange the consecutive innovation sequences is not more than
\begin{align*}
\left(\begin{array}{cc}
2l\\
2a_j\end{array}\right)+
\left(\begin{array}{cc}
2l\\
2a_j+1\end{array}\right)=
\left(\begin{array}{cc}
2l+1\\
2a_j+1\end{array}\right).
\end{align*}
The number of the ways to select positions of edges (including $\mathbf{T}_1$, $\mathbf{T}_3$ and $\mathbf{T}_4$) is
\begin{align*}
\left(\prod_{j=1}^{2m}\left(\begin{array}{cc}
2l+1\\
2a_j+1\end{array}\right)\right)
\left(\begin{array}{cc}
4ml-k\\
k\end{array}\right).
\end{align*}

After fixing the positions of edges, we need to know the selections to plot an edge of the given type. For an innovation or an irregular $\mathbf{T}_3$ edge, there is only one way to plot once the subgraph prior to this edge is plotted. By Lemma \ref{le:1}, there are at most $t+1$ single innovations to be
matched by a regular $\mathbf{T}_3$ edge. By Lemma \ref{le:2}, there are at most $2t$ regular $\mathbf{T}_3$ edges.
Hence, there are at most $\left(t+1\right)^{2t}\le \left(t+1\right)^{2\left(4ml-2k\right)}$ ways to plot the regular $\mathbf{T}_3$-edges. For each $\mathbf{T}_4$ edge, there are at most $\left(k+1\right)^2$ ways to determine its two vertices. Therefore, there are at most \begin{small}$\left(\begin{array}{cc}\left(k+1\right)^2\\t\end{array}\right)$ \end{small} ways to plot the $t$ $\mathbf{T}_2$ edges. And, there are at most $t^{4ml-2k}<\left(t+1\right)^{4ml-2k}$ ways to distribute the $4ml-2k \  \mathbf{T}_4$ edges.

Together with the analysis above and (\ref{al:7}), (\ref{al1}) can be estimated by
\begin{align*}
{\rm E}\left({\rm tr}\mathbf R_n^{2m}\left(l\right)\right)\le &2n^{-2ml}\sum\left(\prod_{j=1}^{2m}\left(\begin{array}{cc}
2l+1\\
2a_j+1\end{array}\right)\right)
\left(\begin{array}{cc}
4ml-k\\
k\end{array}\right)
\left(\begin{array}{cc}
\left(k+1\right)^2\\
t\end{array}\right)\\
&¡¤\left(t+1\right)^{3\left(4ml-2k\right)}
n^sp^{r+1}\left(\delta_n\sqrt n\right)^{4ml-2k-t}\\
\le2 &n\sum\left(\prod_{j=1}^{2m}\left(\begin{array}{cc}
2l+1\\
2a_j+1\end{array}\right)\right)
\left(\begin{array}{cc}
4ml-k\\
k\end{array}\right)
\left(\begin{array}{cc}
\left(k+1\right)^2\\
t\end{array}\right)\\
&¡¤\left(t+1\right)^{3\left(4ml-2k\right)}
y_n^{r+1}\delta_n^{4ml-2k}\left(\delta_n\sqrt n\right)^{-t}
\end{align*}
where the summation is taken subject to restrictions $1 \le k \le 2ml, 0 \le t \le 2ml, \ {\rm and} \  0\le a_j\le l$. Applying (5.2.16) and (5.2.17) in \cite{bai2010spectral}, i.e.
\begin{align*}
y_n^{r+1}\le y_n^{\left(k-t-2m\right)/2}
\end{align*}
and
\begin{align*}
\left(\begin{array}{cc}
2l+1\\
2a_j+1\end{array}\right)
\le\left(2l+1\right)^{2\sum a_j+2m}\le\left(2l+1\right)^{2t+2m},
\end{align*}
we have
\begin{align*}
{\rm E}\left({\rm tr}\mathbf R_n^{2m}\left(l\right)\right)
\le 2&n\sum_{k=1}^{2ml}\sum_{t=0}^{4ml-2k}\left(2l+1\right)^{2m}\left(l+1\right)^{2m}
\left(\begin{array}{cc}
4ml-k\\
k\end{array}\right)\\
&\left(\frac{\left(k+1\right)^2}{\delta_n\sqrt n}\right)^{-t}
\left(t+1\right)^{3\left(4ml-2k\right)}y_n^{\left(k-t-2m\right)/2}\delta_n^{4ml-2k}\\
\le 2&n^2\left(2l+1\right)^{2m}\left(l+1\right)^{2m}y_n^{-m}\sum_{k=1}^{2ml}
\left(\begin{array}{cc}
4ml-k\\
k\end{array}\right)\\
&\left(\frac{3\left(4ml-2k\right)\delta_n^{1/3}}{\log{\delta_n\sqrt {y_nn}/\left(k+1\right)^2}}\right)^{3\left(4ml-2k\right)}
y_n^{k/2}\\
\le 2&n^2\left(2l+1\right)^{2m}\left(l+1\right)^{2m}y_n^{-m}
\left[y_n^{1/4}+\frac{24ml\delta_n^{1/3}}{\frac{1}{3}\log n}\right]^{4ml}\\
\le 2&n^2\left(2l+1\right)^{2m}\left(l+1\right)^{2m}y_n^{m\left(l-1\right)}
\left[1+o\left(1\right)\right]^{4ml}
\end{align*}
where the second inequality follows from the elementary inequality
\begin{align*}
\alpha^{-(t+1)}\left(t+1\right)^{\beta}\le\left(\frac{\beta}{\log (\alpha)}\right)^{\beta},\beta>0,\alpha>1.
\end{align*}
Thus, combining the inequalities above, $m/\log n\to \infty$ with (\ref{al:8}), we have
\begin{align*}
{\rm P}\left(\left\|\mathbf R_n^{2m}\left(l\right)\right\|_2\ge\eta\right)\le2n^2\left(\frac{\left(2l+1\right)\left(l+1\right)y_n^{\left(l-1\right)}}{\eta}\right)^{2m}
\left[1+o\left(1\right)\right]^{4ml}
\end{align*}
which is summable. Therefore, by Borel-Cantelli lemma, one has
\begin{align*}
\limsup_{n\to\infty}\left\|\mathbf R_n\left(l\right)\right\|_2\le\left(2l+1\right)\left(l+1\right)y^{\left(l-1\right)/2}
 \ a.s..
\end{align*}
\end{proof}
In the following, we say that a matrix is $o\left(1\right)$ if its $2$-norm tends to $0$.
\begin{lemma}\label{le:6}
Under the conditions of Theorem \ref{th:1}, we have
\begin{align}\label{al:12}
\mathbf R_n\mathbf R_n\left(k\right)=\mathbf R_n\left(k+1\right)+y\sigma^2\mathbf R_n\left(k\right)+y\sigma^4\mathbf R_n\left(k-1\right)+o\left(1\right) \ { a.s..}
\end{align}
\end{lemma}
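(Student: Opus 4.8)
The plan is to prove the identity (\ref{al:12}) by expanding both sides entrywise over index paths, matching the summand against the defining sum of $\mathbf R_n(k+1)$ in (\ref{al:9}), and then accounting for the difference in the constraint sets by two successive ``collapses'' that reproduce exactly the terms $y\sigma^2\mathbf R_n(k)$ and $y\sigma^4\mathbf R_n(k-1)$. Throughout, recall that $\mathbf R_n=\mathbf R_n(1)$.

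First I would use $\mathbf R_n(1)_{ac}=n^{-1}\sum_w x_{aw}\overline x_{cw}$ (valid for $a\neq c$) together with the expansion of $\mathbf R_n(k)_{cb}$ to write
\[
(\mathbf R_n\mathbf R_n(k))_{ab}=n^{-(k+1)}\sum x_{aw}\overline x_{cw}x_{cv_1}\overline x_{u_1v_1}\cdots x_{u_{k-1}v_k}\overline x_{bv_k},
\]
the sum being subject to $a\neq c$, $c\neq u_1,u_1\neq u_2,\ldots,u_{k-1}\neq b$ and $v_1\neq v_2,\ldots,v_{k-1}\neq v_k$. Relabelling $w=v_0$ and $c=u_0$, this summand is precisely that of $\mathbf R_n(k+1)_{ab}$, and the index restrictions coincide with those of $\mathbf R_n(k+1)$ except that the pairing $v_0\neq v_1$ is absent. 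Hence $\mathbf R_n\mathbf R_n(k)=\mathbf R_n(k+1)+\mathbf A$, where $\mathbf A$ collects exactly the terms with $v_0=v_1$.

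Next I would evaluate $\mathbf A$. Setting $v_0=v_1=:v$ makes the two adjacent factors collapse to $\overline x_{u_0v}x_{u_0v}=\|x_{u_0v}\|^2\mathbf e$; indeed, with $x$ as in the introduction one checks $\overline x=\mathrm{adj}(x)$, so $\overline x x=x\overline x=\det(x)\mathbf e=\|x_{u_0v}\|^2\mathbf e$ by (\ref{al:11}). The index $u_0$ now occurs only in this scalar, and summing it gives $\sum_{u_0}\|x_{u_0v}\|^2$, which by the column-sum version of the uniform concentration (\ref{al:19}) (the same moment computation with rows and columns interchanged) equals $p\sigma^2(1+o(1))$. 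Thus $\mathbf A$ becomes $n^{-(k+1)}p\sigma^2$ times an $\mathbf R_n(k)$-shaped sum from which only the single constraint $a\neq u_1$ is missing. Writing that shaped sum as $n^{k}\mathbf R_n(k)_{ab}$ plus the terms with $a=u_1$ isolates the first retained piece, since $n^{-(k+1)}p\sigma^2\,n^{k}=(p/n)\sigma^2\to y\sigma^2$. In the $a=u_1$ terms the factors $x_{av}\overline x_{av}=\|x_{av}\|^2\mathbf e$ collapse again, the freed column index is summed to $\sum_v\|x_{av}\|^2=n\sigma^2(1+o(1))$ by (\ref{al:19}), and the remaining path is an $\mathbf R_n(k-1)$ sum; since $n^{-(k+1)}p\sigma^2\,n\sigma^2\,n^{k-1}=(p/n)\sigma^4\to y\sigma^4$, this yields $y\sigma^4\mathbf R_n(k-1)$. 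Together these recover the three named terms of (\ref{al:12}).

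The hard part is to show that every discarded remainder has $2$-norm tending to $0$ almost surely, not merely entrywise. There are three kinds: (a) the centered inner sums $\sum_u(\|x_{uv}\|^2-\sigma^2)$ and $\sum_v(\|x_{av}\|^2-\sigma^2)$ left over from replacing an inner sum by its limit; (b) boundary corrections from the finitely many forbidden indices (restoring $v_0\neq v_1$, or deleting $u_0\in\{a,u_1\}$ and $v\in\{v_2\}$); and (c) higher coincidences in which three or more collapsed indices agree. I would cast each remainder as a $\diamond$-product one of whose factors is either a diagonal matrix whose norm is controlled by (\ref{al:19}) or a Hadamard-type correction, so that Lemma \ref{le:5} and Lemma \ref{le:9} apply, while the surviving $\mathbf X_n$ and $\mathbf X_n^*$ factors are bounded by the almost sure estimate $\|n^{-1/2}\mathbf X_n\|_2\le 3$ from the proof of Lemma \ref{le:3}. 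A term carrying a diagonal factor $\mathbf D$ with $\|\mathbf D\|_2=o(n)$ is then, by Lemma \ref{le:5} and $\|n^{-1/2}\mathbf X_n\|_2\le3$, of norm $O(n^{-1}\|\mathbf D\|_2)=o(1)$, and each boundary or higher-coincidence term carries at least one additional power of $n^{-1}$; hence all remainders vanish almost surely, which establishes (\ref{al:12}).
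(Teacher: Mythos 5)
Your proposal is correct and is essentially the paper's own argument in entrywise form: the paper packages the same coincidence bookkeeping as the matrix identities $\mathbf R_n\left(k\right)=n^{-k}\mathbf X_n\left(\mathbf X_n^*\diamond\cdots\diamond\mathbf X_n^*\right)-n^{-1}\mathrm{diag}\left(\mathbf X_n\mathbf X_n^*\right)\mathbf R_n\left(k-1\right)+\mathbf X_n^{\left(3\right)}\diamond\left(\cdots\right)$ and its analogue for $\mathbf R_n\left(k+1\right)$, with your three remainder classes corresponding exactly to the factors $n^{-1}\mathrm{diag}\left(\mathbf X_n\mathbf X_n^*\right)-\sigma^2\mathbf I_p^Q$, $p^{-1}\mathrm{diag}\left(\mathbf X_n^*\mathbf X_n\right)-\sigma^2\mathbf I_n^Q$, and the third-moment matrix $\mathbf X_n^{\left(3\right)}=n^{-3/2}\left(\left\|x_{uv}\right\|^2x_{uv}\right)$, all controlled via (\ref{al:19}) and Lemma \ref{le:5} just as you propose. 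No substantive difference or gap.
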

\begin{proof}
We only need to show that (\ref{al:12}) is true for $\sigma=1$. Define $\mathbf X_n^{\left(3\right)}=n^{-\frac{3}{2}}\left(\left\|x_{uv}\right\|^2x_{uv}\right)$, then, by (\ref{al:19}), we get
\begin{align}\label{al:13}
\left\|\mathbf X_n^{\left(3\right)}\right\|_2^2\le& n^{-3}\max_{u\le p}\sum_v\left\|x_{uv}\right\|^6
\le\delta_n^4\frac{1}{n}\max_{u\le p}\sum_v\left\|x_{uv}\right\|^2\\
\le&\delta_n^4\frac{1}{n}\max_{u\le p}\sum_v{\rm E}\left\|x_{uv}\right\|^2
\le\delta_n^4
\to0 \ {\rm a.s..}
\end{align}
According to Definition \ref{de:1} and (\ref{al:13}),
\begin{align}
\mathbf R_n\left(k\right)=&n^{-k}\overbrace{\mathbf X_n\diamond\mathbf X_n^*\diamond\cdots\diamond\mathbf X_n}^{k \ \mathbf X_n}\diamond\mathbf X_n^*\notag\\
=&n^{-k}\mathbf X_n\left(\overbrace{\mathbf X_n^*\diamond\cdots\diamond\mathbf X_n\diamond\mathbf X_n^*}^{k \ \mathbf X_n^*}\right)
-n^{-1}\left[{\rm diag}\left(\mathbf X_n\mathbf X_n^*\right)\right]\mathbf R_n\left(k-1\right)\notag\\
+&\mathbf X_n^{\left(3\right)}\diamond\left(n^{-\frac{2k-3}{2}}\overbrace{\mathbf X_n^*\diamond\cdots\diamond\mathbf X_n\diamond\mathbf X_n^*}^{k-1 \ \mathbf X_n^*}\right)\notag\\
=&n^{-k}\mathbf X_n\left(\overbrace{\mathbf X_n^*\diamond\cdots\diamond\mathbf X_n\diamond\mathbf X_n^*}^{k \ \mathbf X_n^*}\right)
-\mathbf R_n\left(k-1\right)+o\left(1\right) \ a.s.
\end{align}
where the last equality follows from Lemma \ref{le:5}. Similarly
\begin{align}
\mathbf R_n\left(k+1\right)=&n^{-k-1}\mathbf X_n\left(\overbrace{\mathbf X_n^*\diamond\cdots\diamond\mathbf X_n\diamond\mathbf X_n^*}^{k+1 \ \mathbf X_n^*}\right)
-n^{-1}\left[{\rm diag}\left(\mathbf X_n\mathbf X_n^*\right)\right]\mathbf R_n\left(k\right)\notag\\
+&o\left(1\right) \ a.s.\notag\\
=&n^{-1}\mathbf X_n\mathbf X_n^*\mathbf R_n\left(k\right)-n^{-k-1}\mathbf X_n{\rm diag}\left(\mathbf X_n^*\mathbf X_n\right)\left(\overbrace{\mathbf X_n^*\diamond\cdots\diamond\mathbf X_n\diamond\mathbf X_n^*}^{k \ \mathbf X_n^*}\right)\notag\\
-&n^{-1}\left[{\rm diag}\left(\mathbf X_n\mathbf X_n^*\right)\right]\mathbf R_n\left(k\right)
+o\left(1\right) \ a.s.\notag\\
=&\mathbf R_n\mathbf R_n\left(k\right)-yn^{-k}\mathbf X_n\left(\overbrace{\mathbf X_n^*\diamond\cdots\diamond\mathbf X_n\diamond\mathbf X_n^*}^{k \ \mathbf X_n^*}\right)
+o\left(1\right) \ a.s.\notag\\
=&\mathbf R_n\mathbf R_n\left(k\right)-y\left(\mathbf R_n\left(k\right)+\mathbf R_n\left(k-1\right)\right)
+o\left(1\right) \ a.s..
\end{align}
The proof is complete.
\end{proof}
\begin{lemma}\label{le:7}
Under the conditions of Theorem \ref{th:1}, we have
\begin{small}
\begin{align}\label{al:14}
\left(\mathbf R_n-y\sigma^2\mathbf I_{p}^Q\right)^k=\sum_{r=0}^k\left(-1\right)^{r+1}\sigma^{2\left(k-r\right)}\mathbf R_n\left(r\right)\sum_{j=0}^{\left[\left(k-r\right)/2\right]}C_j\left(k,r\right)y^{k-r-j}+o\left(1\right)
\end{align}
\end{small}
where the constants $\left|C_j\left(k,r\right)\right|\le2^k$.
\end{lemma}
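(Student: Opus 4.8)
The plan is to prove the identity by induction on $k$, turning the three-term recursion of Lemma \ref{le:6} into a one-step rule for left-multiplication by $\mathbf R_n-y\sigma^2\mathbf I_p^Q$ and then reading off the coefficients combinatorially. By homogeneity it suffices to treat $\sigma=1$: replacing each $x_{uv}$ by $x_{uv}/\sigma$ rescales $\mathbf R_n(r)$ to $\sigma^{-2r}\mathbf R_n(r)$ and leaves $y$ fixed, so the general statement (with its factor $\sigma^{2(k-r)}$) follows from the case $\sigma=1$ after multiplying through by the fixed constant $\sigma^{2k}$, which does not disturb the $o(1)$ term. I take $\sigma=1$ from here on.

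First I would record the boundary data. Since $\mathbf R_n(0)=\mathbf I_p^Q$ and $\mathbf R_n=\mathbf R_n(1)+o(1)$ a.s. (the diagonal $n^{-1}\mathrm{diag}(\mathbf X_n\mathbf X_n^*)\to\mathbf I_p^Q$ a.s. by (\ref{al:19})), Lemma \ref{le:6} rearranges to
\begin{align*}
(\mathbf R_n-y\mathbf I_p^Q)\mathbf R_n(r)&=\mathbf R_n(r+1)+y\mathbf R_n(r-1)+o(1),\quad r\ge1,\\
(\mathbf R_n-y\mathbf I_p^Q)\mathbf R_n(0)&=\mathbf R_n(1)-y\mathbf R_n(0)+o(1).
\end{align*}
Crucially, Lemma \ref{le:3} with $l=1$ gives $\limsup_n\|\mathbf R_n(1)\|_2\le6$ a.s., hence $\limsup_n\|\mathbf R_n-y\mathbf I_p^Q\|_2<\infty$ a.s.; consequently left-multiplication by $\mathbf R_n-y\mathbf I_p^Q$ sends an $o(1)$ matrix to an $o(1)$ matrix, and this is precisely what keeps the error terms $o(1)$ across the $k$ successive multiplications.

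Next I would run the induction. Writing $(\mathbf R_n-y\mathbf I_p^Q)^k=\sum_{r=0}^k d_r(k)\mathbf R_n(r)+o(1)$, multiplying on the left by $\mathbf R_n-y\mathbf I_p^Q$ and substituting the two displayed rules yields $d_s(k+1)=d_{s-1}(k)+y\,d_{s+1}(k)$ for $s\ge2$, with boundary relations $d_1(k+1)=d_0(k)+y\,d_2(k)$ and $d_0(k+1)=y\,d_1(k)-y\,d_0(k)$, started from $d_0(0)=1$. These coefficients have a transparent lattice-path reading: $d_r(k)$ is the signed sum over walks of length $k$ from level $0$ to level $r$ that take an up-step (weight $+1$), a down-step from a positive level (weight $+y$), or a reflecting step at level $0$ (weight $-y$). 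A walk reaching level $r$ in $k$ steps with $j$ genuine down-steps has $R=k-r-2j$ reflecting steps, so it carries the monomial $(-1)^{R}y^{\,j+R}=(-1)^{k-r}y^{\,k-r-j}$; since $R\ge0$ forces $0\le j\le[(k-r)/2]$, grouping by $j$ produces exactly $\sum_{j=0}^{[(k-r)/2]}C_j(k,r)y^{\,k-r-j}$, where $C_j(k,r)$ is the signed count of such walks (the fixed sign $(-1)^{k-r}$ being recorded as $(-1)^{r+1}$ in the statement, with the residual $r$-independent sign folded into $C_j$).

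The bound is then immediate: $|C_j(k,r)|$ is at most the number of length-$k$ walks, and as the recursion branches into at most two terms at each step, there are at most $2^k$ of them, whence $|C_j(k,r)|\le2^k$. Restoring $\sigma$ by the scaling of the first paragraph supplies the factor $\sigma^{2(k-r)}$ and finishes the proof. The only genuinely delicate step is the bookkeeping at level $0$: the reflecting contribution $-y\mathbf R_n(0)$ (rather than a nonexistent $\mathbf R_n(-1)$) is what confines the index range to $0\le r\le k$, fixes the admissible powers of $y$, and generates the alternating sign. Getting this boundary term right, and verifying that the a.s.\ boundedness of $\|\mathbf R_n-y\mathbf I_p^Q\|_2$ really does prevent the accumulated error from exceeding $o(1)$, is where I would concentrate the care.
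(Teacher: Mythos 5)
Your proof is correct and follows essentially the same route as the paper's: induction on $k$, using Lemma \ref{le:6} to expand $\left(\mathbf R_n-y\sigma^2\mathbf I_{p}^Q\right)\mathbf R_n\left(r\right)$ and tracking the resulting two-term coefficient recursion, which is exactly how the paper obtains $\left|C_j\left(k,r\right)\right|\le 2^k$ (each $C_j\left(k+1,r\right)$ being a sum of at most two of the $C_j\left(k,\cdot\right)$). Your lattice-path reading of the coefficients and your explicit verification that the a.s.\ boundedness of $\left\|\mathbf R_n-y\sigma^2\mathbf I_{p}^Q\right\|_2$ keeps the accumulated error $o\left(1\right)$ are sound refinements of details the paper leaves implicit.
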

\begin{proof}
We shall prove this lemma by induction on $k$.
\begin{itemize}
\item{1.} When $k=1$,
 $$\mathbf R_n-y\sigma^2\mathbf I_{p}^Q=\mathbf R_n\left(1\right)C_0\left(1,1\right)-y\sigma^2\mathbf R_n\left(0\right)C_0\left(1,0\right)$$
 where $C_0\left(1,1\right)=1$  and $C_0\left(1,0\right)=1$.
\item{2.} Suppose the lemma is true for $k$. By Lemma \ref{le:6}, we have
\begin{align*}
&\left(\mathbf R_n-y\sigma^2\mathbf I_{p}^Q\right)^{k+1}\\
=&\left(\mathbf R_n-y\sigma^2\mathbf I_{p}^Q\right)\bigg(\sum_{r=0}^k\left(-1\right)^{r+1}\sigma^{2\left(k-r\right)}\mathbf R_n\left(r\right)\\
\times&\sum_{j=0}^{\left[\left(k-r\right)/2\right]}C_j\left(k,r\right)y^{k-r-j}+o\left(1\right)\bigg)\\
=&\mathbf R_n\bigg(\sum_{r=1}^k\left(-1\right)^{r+1}\sigma^{2\left(k-r\right)}\mathbf R_n\left(r\right)\sum_{j=0}^{\left[\left(k-r\right)/2\right]}C_j\left(k,r\right)y^{k-r-j}\\
-&\sigma^{2k}\sum_{j=0}^{\left[k/2\right]}C_j\left(k,0\right)y^{k-j}\mathbf I_{p}^Q\bigg)\\
-&\sum_{r=0}^k\left(-1\right)^{r+1}\sigma^{2\left(k-r+1\right)}\mathbf R_n\left(r\right)\sum_{j=0}^{\left[\left(k-r\right)/2\right]}C_j\left(k,r\right)y^{k-r-j+1}+o\left(1\right)\\
=&\sum_{r=1}^k\left(-1\right)^{r+1}\sigma^{2\left(k-r\right)}\left(\mathbf R_n\left(r+1\right)+y\sigma^2\mathbf R_n\left(r\right)+y\sigma^4\mathbf R_n\left(r-1\right)\right)\\
\times&\sum_{j=0}^{\left[\left(k-r\right)/2\right]}C_j\left(k,r\right)y^{k-r-j}
-\sigma^{2k}\sum_{j=0}^{\left[k/2\right]}C_j\left(k,0\right)y^{k-j}\mathbf R_n\\
-&\sum_{r=0}^k\left(-1\right)^{r+1}\sigma^{2\left(k-r+1\right)}\mathbf R_n\left(r\right)\sum_{j=0}^{\left[\left(k-r\right)/2\right]}C_j\left(k,r\right)y^{k-r-j+1}+o\left(1\right)\\
=&\sum_{r=1}^{k+1}\left(-1\right)^{r+1}\sigma^{2\left(k+1-r\right)}\mathbf R_n\left(r\right)\sum_{j=0}^{\left[\left(k+1-r\right)/2\right]}\left[-C_j\left(k,r-1\right)\right]y^{k+1-r-j}\\
+&\sum_{r=0}^{k-1}\left(-1\right)^{r+1}\sigma^{2\left(k+1-r\right)}\mathbf R_n\left(r\right)\sum_{j=1}^{\left[\left(k+1-r\right)/2\right]}\left[-C_j\left(k,r+1\right)\right]y^{k+1-r-j}\\
-&\sigma^{2\left(k+1\right)}\sum_{j=0}^{\left[k/2\right]}C_{j-1}\left(k,0\right)y^{k-j}\mathbf I_{p}^Q+o\left(1\right)\\
=&\sum_{r=0}^{k+1}\left(-1\right)^{r+1}\sigma^{2\left(k-r\right)}\mathbf R_n\left(r\right)\sum_{j=0}^{\left[\left(k+1-r\right)/2\right]}C_j\left(k+1,r\right)y^{k+1-r-j}+o\left(1\right) \ a.s.
\end{align*}
where $C_j\left(k+1,r\right)$ is a sum of one or two terms of the form $-C_j\left(k,r-1\right)$ and $-C_j\left(k,r+1\right)$.
\item{3.} By induction,we conclude that (\ref{al:14}) is true for all fixed $k$.
\end{itemize}
Thus, the proof of this lemma is complete.
\end{proof}

\subsection{Proof of Theorem \ref{th:1}}
By Lemma \ref{le:3} and Lemma \ref{le:7}, for any fixed $k$, we have
\begin{align*}
\left\|\mathbf R_n-y\sigma^2\mathbf I_{p}^Q\right\|_2^k\le&\sum_{r=0}^k\sigma^{2\left(k-r\right)}\left\|\mathbf R_n\left(r\right)\right\|_2\sum_{j=0}^{\left[\left(k-r\right)/2\right]}C_j\left(k,r\right)y^{k-r-j}\\
\le&\sum_{r=0}^k\sigma^{2k}\left(2r+1\right)\left(r+1\right)y^{\frac{r-1}{2}}\left[\left(k-r\right)/2\right]2^ky^{\frac{k-r}{2}}\\
\le&C\sigma^{2k}k^42^ky^{\frac{k-1}{2}}.
\end{align*}
Therefore,
\begin{align*}
\left\|\mathbf R_n-y\sigma^2\mathbf I_{p}^Q\right\|_2\le C^{1/k}\sigma^{2}k^{4/k}2y^{\frac{k-1}{2k}}.
\end{align*}
Letting $k\to\infty$, we obtain
\begin{align}\label{al:15}
\limsup\left\|\mathbf R_n-y\sigma^2\mathbf I_{p}^Q\right\|_2\le2\sigma^2\sqrt y \ a.s..
\end{align}
By (\ref{al:19}), we have
\begin{align}\label{al:16}
\left\|\mathbf S_n-\sigma^2\mathbf I_{p}^Q-\mathbf R_n\right\|_2=&\left\|{\rm diag}\left(\mathbf S_n\right)-\sigma^2\mathbf I_{p}^Q\right\|_2\notag\\
\le& \max_{u\le p}\left|\frac{1}{n}\sum_{v=1}^n\left(\left\|x_{uv}\right\|^2-\sigma^2\right)\right|\to 0 \ a.s..
\end{align}
Together with (\ref{al:15}) and (\ref{al:16}), one has
\begin{align*}
\left\|\mathbf S_n-\sigma^2\left(1+y\right)\mathbf I_{p}^Q\right\|_2\le\left\|\mathbf S_n-\sigma^2\mathbf I_{p}^Q-\mathbf R_n\right\|_2+\left\|\mathbf R_n-y\sigma^2\mathbf I_{p}^Q\right\|_2
\le2\sigma^2\sqrt y \ a.s..
\end{align*}
The proof of Theorem \ref{th:1} is complete.

\section{Proof of Theorem \ref{th:2}}
Due to Theorem 1.1 in Li, Bai and Hu \cite{li2013convergence}, with probability 1, we have
$$\limsup s_{\min}\left(\mathbf S_n\right)\le\sigma^2\left(1-\sqrt y\right)^2$$
and
$$\liminf s_p\left(\mathbf S_n\right)\ge\sigma^2\left(1+\sqrt y\right)^2.$$
Then, by Theorem \ref{th:1},
\begin{align*}
\limsup s_p\left(\mathbf S_n\right)=&\sigma^2\left(1+y\right)+\limsup s_p\left(\mathbf S_n-\sigma^2\left(1+y\right)\mathbf I_{p}^Q\right)\\
\le&\sigma^2\left(1+y\right)+2\sigma^2\sqrt y=\sigma^2\left(1+\sqrt y\right)^2
\end{align*}
and
\begin{align*}
\liminf s_{\min}\left(\mathbf S_n\right)\ge&\sigma^2\left(1+ y\right)+\liminf s_{\min}\left(\mathbf S_n-\sigma^2\left(1+y\right)\mathbf I_{p}^Q\right)\\
\ge&\sigma^2\left(1+y\right)-2\sigma^2\sqrt y=\sigma^2\left(1-\sqrt y\right)^2.
\end{align*}
Combining the above inequalities, we get $a.s.$
$$\lim s_{\min}\left(\mathbf S_n\right)=\left(1-\sqrt y\right)^2\sigma^2$$
and
$$\lim s_p\left(\mathbf S_n\right)=\left(1+\sqrt y\right)^2\sigma^2.$$
Therefore, we conclude the proof of Theorem \ref{th:2}.

\section{Proof of Theorem \ref{th:3}}
By Remark \ref{re:2}, we only need to prove the necessity of the conditions.
\subsection{Condition $\left({\rm i}\right)$}
Define a unit vector $\mathbf e_{2u}=\left(0,\cdots,0,1,0,\cdots,0\right)^{\prime},u=1,\cdots,p$, then,
\begin{align*}
s_p\left(\mathbf Z_n\right)=s_{2p}\left(\psi\left(\mathbf Z_n\right)\right)\ge\mathbf e_{2u}^{\prime}\psi\left(\mathbf Z_n\right)\mathbf e_{2u}=\frac{1}{n}\sum_{v=1}^n\left\|z_{uv}\right\|^2
\end{align*}
which implies that
\begin{align*}
s_p\left(\mathbf Z_n\right)\ge\max_{u\le p}\frac{1}{n}\sum_{v=1}^n\left\|z_{uv}\right\|^2.
\end{align*}
By Lemma \ref{le:4}, if ${\rm E}\left\|z_{11}\right\|^4=\infty$, we obtain
\begin{align*}
\limsup_{n\to\infty}\max_{u\le p}\frac{1}{n}\sum_{v=1}^n\left\|z_{uv}\right\|^2\to\infty \ { a.s..}
\end{align*}
This contradicts the assumptions. The condition $\left({\rm i}\right)$ is proved.

\subsection{Condition $\left({\rm iii}\right)$}
Suppose ${\rm E}\left\|z_{11}\right\|^4<\infty$ but ${\rm E}\left(z_{11}\right)=\hbar\neq 0$ ($\hbar$ is a quaternion). Then
\begin{align*}
\left\|\frac{1}{\sqrt n}\mathbf Z_n\right\|_2\ge&\left\|\frac{1}{\sqrt n}\left({\rm E}\mathbf Z_n\right)\right\|_2-\left\|\frac{1}{\sqrt n}\left(\mathbf Z_n-{\rm E}\mathbf Z_n\right)\right\|_2\\
\ge&\frac{\left\|\hbar\right\|\sqrt{pn}}{\sqrt n}-\left\|\frac{1}{\sqrt n}\left(\mathbf Z_n-{\rm E}\mathbf Z_n\right)\right\|_2\to\infty, \ { a.s..}
\end{align*}
This contradicts the assumptions. The condition $\left({\rm iii}\right)$ is proved.
\subsection{The completion of Theorem \ref{th:3}}
Conditions $\left({\rm ii}\right)$ and $\left({\rm iv}\right)$ follow from Theorem \ref{th:1}.  Thus, the proof of the theorem is complete.

\section{Appendix}
In this section, we list some  lemmas for readers convenience.

\begin{lemma}[Lemma B.25 in \cite{bai2010spectral}]\label{le:4}
Let $\left\{z_{jk},j,k=1,2,\cdots\right\}$ be a double array of i.i.d. complex random variables and let $\alpha>\frac{1}{2},\beta\ge0$, and $M>0$ be constants. Then, as $n\to\infty$,
\begin{align*}
\max_{j\le Mn^{\beta}}\left|n^{-\alpha}\sum_{k=1}^{n}\left(z_{jk}-c\right)\right|\to 0,a.s.
\end{align*}
if and only if the following hold:

(i) ${\rm E}\left|z_{11}\right|^{\left(1+\beta\right)/\alpha}<\infty$;

(ii) $c=\left\{\begin{array}{cc}{\rm E}\left(z_{11}\right),&{\rm if} \ \alpha\le1,\\{\rm any \  number},&{\rm if} \ \alpha>1.\end{array}\right.$
\end{lemma}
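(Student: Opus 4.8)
\emph{Strategy.} The plan is to route both directions through the elementary equivalence
\[
{\rm E}\left|z_{11}\right|^{\left(1+\beta\right)/\alpha}<\infty\iff\sum_{n}n^{\beta}{\rm P}\left(\left|z_{11}\right|>\epsilon n^{\alpha}\right)<\infty\quad\text{for all }\epsilon>0,
\]
which follows from the integral comparison $\sum_n n^{s}{\rm P}(|z_{11}|>cn^{\alpha})\asymp{\rm E}|z_{11}|^{(s+1)/\alpha}$ with $s=\beta$. The subtlety I would keep in mind throughout is that the same series summed only over a \emph{dyadic} subsequence $n=2^{m}$ with weight $2^{ms}$ instead reproduces ${\rm E}|z_{11}|^{s/\alpha}$; this one-power gap between the ``full'' and the ``dyadic'' sum is exactly what makes $(1+\beta)/\alpha$ (and not $(2+\beta)/\alpha$) the sharp exponent, and I would exploit it in opposite ways for the two implications.

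\emph{Necessity.} Writing $S_{jn}=\sum_{k=1}^{n}(z_{jk}-c)$ and $T_n=\max_{j\le Mn^{\beta}}n^{-\alpha}|S_{jn}|$, I would assume $T_n\to0$ a.s.\ and first isolate the last column: from $n^{-\alpha}(z_{jn}-c)=n^{-\alpha}S_{jn}-\left((n-1)/n\right)^{\alpha}(n-1)^{-\alpha}S_{j,n-1}$, restricted to the common range $j\le M(n-1)^{\beta}$ covered by both $T_n$ and $T_{n-1}$, one gets $n^{-\alpha}\max_{j\le M(n-1)^{\beta}}|z_{jn}|\to0$ a.s. The decisive point is that distinct columns use disjoint variables, so the events $B_n=\{\max_{j\le M(n-1)^{\beta}}|z_{jn}|>\epsilon n^{\alpha}\}$ are \emph{independent}; since they occur finitely often, the second Borel--Cantelli lemma forces $\sum_n{\rm P}(B_n)<\infty$, and ${\rm P}(B_n)$ being bounded below by a fixed multiple of $M(n-1)^{\beta}{\rm P}(|z_{11}|>\epsilon n^{\alpha})$ then yields the series above, i.e.\ $(\mathrm i)$. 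For $(\mathrm{ii})$, when $\alpha\le1$ the bound $(1+\beta)/\alpha\ge1$ gives existence of ${\rm E}z_{11}$, and writing $n^{-\alpha}S_{1n}=n^{1-\alpha}(n^{-1}S_{1n})$ for the single row $j=1$ forces $c={\rm E}z_{11}$, while for $\alpha>1$ the term $cn^{1-\alpha}\to0$ makes every $c$ admissible, exactly as stated.

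\emph{Sufficiency.} Assuming $(\mathrm i)$--$(\mathrm{ii})$, by $(\mathrm{ii})$ the constant $c$ matters only when $\alpha\le1$, where $(1+\beta)/\alpha\ge1$ lets me subtract the true mean and assume centered summands; if $\beta=0$ the maximum runs over boundedly many rows and the claim is just the Marcinkiewicz--Zygmund strong law applied to each, so I would henceforth take $\beta>0$. Working along dyadic blocks $n\in(2^{m},2^{m+1}]$, I would split each active entry at three scales. (a) \emph{Big} entries $|z_{jk}|>\epsilon_1 n^{\alpha}$: their occurrence in block $m$ has probability $\asymp 2^{m(\beta+1)}{\rm P}(|z_{11}|>\epsilon_1 2^{m\alpha})$, which is the dyadic series of weight $2^{m(\beta+1)}$ and hence summable by $(\mathrm i)$, so Borel--Cantelli removes them in all but finitely many blocks. (b) \emph{Moderate} entries $n^{\alpha}/(\log n)^{2}<|z_{jk}|\le\epsilon_1 n^{\alpha}$: each contributes at most $\epsilon_1$ after normalization, and a $p$-th moment Markov estimate with $p=(1+\beta)/\alpha$ shows their per-row number exceeds $R:=\lfloor\epsilon/(2\epsilon_1)\rfloor$ with a block probability that is again summable, so a.s.\ the moderate contribution to every row is at most $R\epsilon_1<\epsilon/2$. (c) \emph{Small} entries $|z_{jk}|\le n^{\alpha}/(\log n)^{2}$: after recentering (the bias $n^{-\alpha}\sum_{k\le n}|{\rm E}z_{1k}I(|z_{1k}|>\epsilon_1 n^{\alpha})|\to0$ by $(\mathrm i)$ when $\alpha\le1$), Bernstein's inequality at deviation $\tfrac{\epsilon}{2}n^{\alpha}$ with bound $n^{\alpha}/(\log n)^{2}$ gives a single-row probability $\le\exp(-c(\log n)^{2})$, super-polynomially small and thus surviving the union bound over the $\le M2^{(m+1)\beta}$ rows. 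Finally a Kolmogorov/L\'evy maximal inequality would transfer the estimate from the endpoint $n=2^{m+1}$ to all $n$ in the block, and Borel--Cantelli yields $T_n\to0$ a.s.

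\emph{Main obstacle.} The hard part is entirely in item (c): forcing the single-row deviation probability to decay \emph{faster than} $n^{-\beta}$, summably along the dyadic subsequence, under only ${\rm E}|z_{11}|^{(1+\beta)/\alpha}<\infty$. A plain union bound with a fixed even moment fails, because after truncating at a constant fraction of $n^{\alpha}$ one surviving term of size $\asymp n^{\alpha}$ gives a block probability that is $\Theta(1)$ in $n$; letting the moment order grow with $m$ cures that term but the Rosenthal constant $q^{q}$ then spoils it. This is why the multi-scale split is unavoidable: one lowers the truncation to $n^{\alpha}/(\log n)^{2}$ so that Bernstein yields super-polynomial decay, and separately controls the few entries of the moderate band by counting. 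It is precisely in balancing these two contributions, with $(\mathrm i)$ invoked once for the big part and once for the moderate count, that the exponent $(1+\beta)/\alpha$ is both forced and seen to be sharp.
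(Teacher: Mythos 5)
The paper does not prove this statement at all: it is quoted verbatim, with attribution, as Lemma B.25 of Bai and Silverstein \cite{bai2010spectral}, and sits in the appendix purely as a cited tool (it is invoked once, in the proof of Theorem \ref{th:3}). So there is no in-paper proof to compare yours against; I can only assess your outline on its own terms, and it is essentially the standard proof of this lemma, with no gap that I can see. The necessity half is exactly right: isolate the last column by differencing $T_n$ and $T_{n-1}$, use that distinct columns involve disjoint variables so the \emph{second} Borel--Cantelli lemma converts ``finitely often'' into $\sum_n n^{\beta}{\rm P}\left(\left|z_{11}\right|>\epsilon n^{\alpha}\right)<\infty$ (your lower bound ${\rm P}(B_n)\ge c\,m_np_n$ needs the observation that ${\rm P}(B_n)\to 0$ forces $m_np_n\to 0$ first, but that is immediate), and read off the moment by the integral test; the identification of $c$ for $\alpha\le 1$ via Marcinkiewicz--Zygmund along a single row is also the standard route. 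The sufficiency half --- three-band truncation, with the big band killed by Borel--Cantelli at dyadic weight $2^{m(\beta+1)}$, the moderate band controlled by bounding the per-row count by $R=\lfloor\epsilon/(2\epsilon_1)\rfloor\ge 1$ (this is where $\beta>0$ is genuinely used, and your reduction of $\beta=0$ to the row-wise strong law is the right way to dispose of that case), and the small band by an exponential bound after recentering --- is the same architecture as the proof in \cite{bai2010spectral}. Two points to make explicit in a full write-up, neither of which is an obstacle: the truncation thresholds must be frozen at the block level (say at $2^{(m+1)\alpha}$ and $2^{(m+1)\alpha}/m^{2}$ for all $n\in(2^{m},2^{m+1}]$) so that the classification of an entry does not drift with $n$ inside a block; and Kolmogorov's maximal inequality in its second-moment form is too weak to carry the Bernstein estimate from the block endpoint to all $n$ in the block --- you need the maximal form of the exponential bound itself, e.g.\ Doob's inequality applied to the submartingale $\exp\left(\lambda\,{\rm Re}\,S_{jn}\right)$ (and its analogues for the other sign and the imaginary part), or an Ottaviani-type reflection, both available here because the truncated summands are bounded.
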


\begin{lemma}[Lemma A.11 in \cite{bai2010spectral}]\label{le:8}
Let $\mathbf A$ be an $m\times n$ matrix with singular values $s_j\left(\mathbf A\right)$,$j=1,2,\cdots,q=\min\left\{m,n\right\}$,arranged in decreasing order. Then, for any integer $k\left(1\le k\le q\right)$,
\begin{align*}
\sum_{j=1}^ks_j\left(\mathbf A\right)=\sup_{\mathbf{E^*E=F^*F=I_k}}\left|{\rm tr}\left(\mathbf{E^*AF}\right)\right|,
\end{align*}
where the orders of $\mathbf E$ are $m\times k$ and those of $\mathbf F$ are $n\times k$.
\end{lemma}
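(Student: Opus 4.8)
The plan is to prove this Ky Fan type identity via the singular value decomposition, reducing the variational problem to an elementary majorization inequality. Write the full SVD $\mathbf A=\mathbf U\mathbf\Sigma\mathbf V^*$, where $\mathbf U$ is $m\times m$ unitary, $\mathbf V$ is $n\times n$ unitary, and $\mathbf\Sigma$ is the $m\times n$ matrix carrying $s_1\left(\mathbf A\right)\ge\cdots\ge s_q\left(\mathbf A\right)$ on its main diagonal (set $s_i=0$ for $i>q$). Everything is done over $\mathbb C$, which suffices since quaternion matrices are treated through their $2\times2$ complex block representation $\psi(\cdot)$.

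For the ``$\ge$'' direction (attainment) I would take $\mathbf E$ to be the first $k$ columns of $\mathbf U$ and $\mathbf F$ the first $k$ columns of $\mathbf V$. These satisfy $\mathbf E^*\mathbf E=\mathbf F^*\mathbf F=\mathbf I_k$, and a direct computation gives $\mathbf E^*\mathbf A\mathbf F={\rm diag}\left(s_1,\cdots,s_k\right)$, so that $\left|{\rm tr}\left(\mathbf E^*\mathbf A\mathbf F\right)\right|=\sum_{j=1}^k s_j$. Hence the supremum is at least $\sum_{j=1}^k s_j\left(\mathbf A\right)$.

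For the harder ``$\le$'' direction, fix any admissible $\mathbf E,\mathbf F$ and set $\mathbf P=\mathbf U^*\mathbf E$, $\mathbf Q=\mathbf V^*\mathbf F$; both still have orthonormal columns, i.e. $\mathbf P^*\mathbf P=\mathbf Q^*\mathbf Q=\mathbf I_k$. Then ${\rm tr}\left(\mathbf E^*\mathbf A\mathbf F\right)={\rm tr}\left(\mathbf P^*\mathbf\Sigma\mathbf Q\right)=\sum_i s_i\sum_{l=1}^k\overline{P_{il}}\,Q_{il}$. Applying the Cauchy--Schwarz inequality in $l$ followed by the arithmetic--geometric mean inequality, I would bound
\begin{align*}
\left|{\rm tr}\left(\mathbf E^*\mathbf A\mathbf F\right)\right|\le\sum_i s_i\sqrt{a_ib_i}\le\sum_i s_i\frac{a_i+b_i}{2},
\end{align*}
where $a_i=\left(\mathbf P\mathbf P^*\right)_{ii}$ and $b_i=\left(\mathbf Q\mathbf Q^*\right)_{ii}$ are the diagonal entries of the orthogonal projections $\mathbf P\mathbf P^*$ and $\mathbf Q\mathbf Q^*$.

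The crux is then a purely scalar majorization step. Since $\mathbf P\mathbf P^*$ and $\mathbf Q\mathbf Q^*$ are rank-$k$ orthogonal projections, their diagonal entries satisfy $a_i,b_i\in[0,1]$ with $\sum_i a_i=\sum_i b_i={\rm tr}\left(\mathbf I_k\right)=k$; consequently the weights $c_i=\left(a_i+b_i\right)/2$ lie in $[0,1]$ and sum to $k$. Because the $s_i$ are nonincreasing, a one-line rearrangement argument --- writing $\sum_i s_ic_i-\sum_{j=1}^k s_j=\sum_{j\le k}s_j\left(c_j-1\right)+\sum_{i>k}s_ic_i\le s_k\left(\sum_i c_i-k\right)=0$ --- yields $\sum_i s_ic_i\le\sum_{j=1}^k s_j$. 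Combining the two directions gives the claimed identity. I expect the main obstacle to be precisely this last step: justifying that the diagonal of an orthogonal projection lies in $[0,1]$ and sums to its rank, and then packaging the resulting constrained linear maximization cleanly. The SVD reduction and the attainment direction are routine by comparison.
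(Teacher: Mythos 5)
Your proof is correct. Note, however, that the paper does not prove this statement at all: it is quoted verbatim as Lemma A.11 of \cite{bai2010spectral} and listed in the appendix purely for the reader's convenience, so there is no in-paper argument to compare against. Your route --- SVD reduction, attainment by the leading left and right singular vectors, then Cauchy--Schwarz and AM--GM to reduce the upper bound to the scalar problem of maximizing $\sum_i s_i c_i$ over weights $c_i\in[0,1]$ with $\sum_i c_i\le k$ --- is the standard proof of the Ky Fan maximum principle and is essentially the argument given in the cited reference. One cosmetic point: when $m\neq n$ the weights $c_i=\left(a_i+b_i\right)/2$ are only defined for $i\le q$, and over that range they sum to \emph{at most} $k$ rather than exactly $k$ (the remaining mass of the two projections sits on indices where $s_i=0$); your final rearrangement should therefore end in $s_k\left(\sum_i c_i-k\right)\le 0$ rather than $=0$, which is all that is needed since $s_k\ge0$. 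The step you flag as the main obstacle is in fact routine: $a_i=\left\|\mathbf P^*\mathbf e_i\right\|^2\le\left\|\mathbf e_i\right\|^2=1$ because $\mathbf P\mathbf P^*$ is an orthogonal projection, and $\sum_i a_i={\rm tr}\left(\mathbf P\mathbf P^*\right)={\rm tr}\left(\mathbf P^*\mathbf P\right)=k$.
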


\end{document}